\newtheorem{theorem}{Theorem}[section]
\newtheorem*{theorem*}{Theorem}
\newtheorem{proposition}[theorem]{Proposition}
\newtheorem*{proposition*}{Proposition}
\newtheorem{definition}[theorem]{Definition}
\newtheorem*{definition*}{Definition}
\newtheorem{example}[theorem]{Example}
\newtheorem{remark}[theorem]{Remark}
\newtheorem{corollary}[theorem]{Corollary}
\newtheorem*{corollary*}{Corollary}
\newtheorem{lemma}[theorem]{Lemma}
\newtheorem{question}[theorem]{Question}
\newtheorem{notation}[theorem]{Notation}
\newtheorem{warning}[theorem]{Warning}
\newcommand\blfootnote[1]{
  \begingroup
  \renewcommand\thefootnote{}\footnote{#1}
  \addtocounter{footnote}{-1}
  \endgroup
}
\begin{document}


\title{Ramification and descent in homotopy theory and derived algebraic geometry}
\author{John D. Berman}
\maketitle

\begin{abstract}\blfootnote{The author was supported by an NSF Postdoctoral Fellowship under grant 1803089.}
We introduce notions of unramified and totally ramified maps in great generality -- for commutative rings, schemes, ring spectra, or derived schemes. We prove that the definition is equivalent to the classical definition in the case of rings of integers in number fields.

The new definition leads directly (without computational techniques) to a calculation of topological Hochschild homology for rings of integers. We show that $\text{THH}(R)$ is the homotopy cofiber of a map $R[\Omega S^3]\otimes_R\Omega^1_{R/\mathbb{Z}}\to R[\Omega S^3\langle 3\rangle]$, so there is a long exact sequence $\cdots\to H_\ast(\Omega S^3;\Omega^1_{R/\mathbb{Z}})\to H_\ast(\Omega S^3\langle 3\rangle;R)\to\text{THH}_\ast(R)\to\cdots$.

Any time an extension $Y/X$ is a composite of unramified and totally ramified extensions, our results allow for the study of $\text{THH}(X)$ in terms of $\text{THH}(Y)$ by a kind of weak \'etale descent (\emph{ramified descent}).
\end{abstract}


\section{Introduction}
\subsection{Ramification}
\noindent A central theme in algebraic number theory is the study of ramification in number fields. If $K\to L$ is an extension of number fields, with associated rings of integers $R\to A$, and if $\mathfrak{p}$ is a prime ideal in $R$, the ramification describes how $\mathfrak{p}$ factors into prime ideals in $A$. The extension $A/R$ is \emph{unramified} at $\mathfrak{p}$ if this factorization is squarefree, or \emph{totally ramified} if $\mathfrak{p}$ is a power of a single prime ideal.

The language comes from geometry: We can regard $\text{Spec}(A)\to\text{Spec}(R)$ as being nearly a covering space map, with some bad behavior at points (primes) which are ramified.

We will generalize these definitions to (algebraically) any extension of rings or even ring spectra or (geometrically) any extension of schemes, derived schemes, or even spectral schemes. We will first give the geometric definition for intuition, although it is the algebraic definition which we will use throughout the paper.

Take arbitrary spectral schemes $\ast\to Y\to X$ (no definition necessary for this paper, but see \cite{HAG2} or \cite{SAG}), and think of $\ast$ as a point in $Y$. Call the homotopy pullback $\ast\times^h_Y\ast$ the space $\Omega_\ast Y$ of based loops and similarly $\Omega_\ast X=\ast\times^h_X\ast$. There are induced maps $\Omega_\ast Y\xrightarrow{f}\Omega_\ast X$ and $\ast\xrightarrow{i}\Omega_\ast X$, where the image of $i$ is thought of as the constant loops in $X$.

\begin{definition}
With $\ast\to Y\to X$ as above, let $\mathcal{F}$ be the homotopy fiber of $\mathcal{O}(\Omega_\ast X)\to f_\ast\mathcal{O}(\Omega_\ast Y)$, or the sheaf of functions on $\Omega_\ast X$ which vanish on loops that lift to $Y$. We say that $Y/X$ is:
\begin{itemize}
\item \emph{unramified at $y$} if $i^\ast\mathcal{F}\cong 0$ (read: $\mathcal{F}$ is trivial at constant loops);
\item \emph{totally ramified at $y$} if $\mathcal{F}\cong i_\ast\mathcal{F}_0$ for some sheaf $\mathcal{F}_0$ defined on $\ast$ (read: $\mathcal{F}$ is trivial away from constant loops).
\end{itemize}
\end{definition}

\noindent Let us unpack what this definition actually means algebraically, in the case where $X=\text{Spec}(R)$, $Y=\text{Spec}(A)$, $\ast=\text{Spec}(k)$.

\begin{definition}\label{DefTotalRam}
If $R\to A\to k$ are commutative\footnote{For us, a commutative ring spectrum means an $\mathbb{E}_\infty$-ring spectrum.} ring spectra, let $I_{A/R}^k$ be the homotopy fiber of the map $k\otimes_Rk\to k\otimes_Ak$ (which is a $k\otimes_Rk$-module). We say the extension $A/R$ is:
\begin{itemize}
\item \emph{unramified} at $k$ if $I_{A/R}^k\otimes_{k\otimes_Rk}k\cong 0$.
\item \emph{totally ramified} at $k$ if there is a $k$-module structure on $I_{A/R}^k$ which induces the intrinsic $k\otimes_Rk$-module structure by restriction along the multiplication map $k\otimes_Rk\to k$.
\end{itemize}
\end{definition}

\begin{remark}
These definitions work perfectly well for ordinary commutative rings and ordinary schemes. Any commutative ring $A$ is an example of a commutative ring spectrum (the discrete or Eilenberg-Maclane spectrum) which we also denote $A$. The tensor products are derived, and the homotopy fiber $k\otimes_Rk\to k\otimes_Ak$ can be computed in the sense of homological algebra.

In particular, every time we write a tensor product of ordinary rings or modules, we take it to be derived.
\end{remark}

\noindent Our first main result justifies the terms \emph{unramified} and \emph{totally ramified}.

\begin{theorem*}[\ref{ThmEquiv}]
Suppose $L/K$ is an extension of number fields, $A/R$ are the rings of integers, $\mathfrak{p}$ is a prime in $R$, and $\mathfrak{p}^\prime|\mathfrak{p}$ is a prime in $A$. Then:
\begin{itemize}
\item $A/R$ is totally ramified at $\mathfrak{p}$ (in the classical sense) if and only if $A/R$ is totally ramified at $A/\mathfrak{p}^\prime$ (in the sense of Definition \ref{DefTotalRam});
\item $A/R$ is unramified at $\mathfrak{p}$ (in the classical sense) if and only if $A/R$ is unramified at $A/\mathfrak{p}^\prime$ (in the sense of Definition \ref{DefTotalRam}).
\item (Proposition \ref{PropFF}) $A/R$ is unramified at $L$
\end{itemize}
\end{theorem*}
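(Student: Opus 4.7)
My plan is to reduce to the local case and then compute $I_{A/R}^k$ explicitly using the base-change identity
\[
k\otimes_R k\simeq k\otimes_A (A\otimes_R A)\otimes_A k.
\]
Since $k=A/\mathfrak{p}'$ is both $\mathfrak{p}$- and $\mathfrak{p}'$-local, both sides of Definition \ref{DefTotalRam} are unchanged on replacing $R, A$ by $R_\mathfrak{p}$ and $A_{\mathfrak{p}'}$; we may therefore assume $R$ and $A$ are DVRs with uniformizers $\pi, \varpi$ satisfying $\pi = u\varpi^e$ for a unit $u$, and residue field extension $k/\kappa$ of degree $f$. Splitting the multiplication $A\otimes_R A\to A$ via $a\mapsto a\otimes 1$ decomposes $A\otimes_R A = A\oplus I_\Delta$ as $A$-modules, with $I_\Delta$ the classical diagonal ideal, and yields $I_{A/R}^k\simeq k\otimes_A I_\Delta\otimes_A k$. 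This translates each part of the theorem into a question about $I_\Delta$ after base change along the two inclusions of $k$.

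For the unramified case, classical unramifiedness at $\mathfrak{p}'$ is equivalent to $A/R$ being \'etale at $\mathfrak{p}'$ (flatness is automatic for rings of integers). \'Etaleness upgrades the $A$-module splitting above to a ring decomposition $A\otimes_R A\simeq A\times I_\Delta$, the diagonal being cut out by an idempotent. Base-changing gives $k\otimes_R k\simeq(k\otimes_A k)\times I$ as $E_\infty$-rings, with the multiplication $k\otimes_R k\to k$ killing the $I$-factor; hence $I\otimes_{k\otimes_R k}k\simeq 0$. Conversely, if $I\otimes_{k\otimes_R k}k=0$, the induced statement on $\pi_0$ recovers the classical vanishing that detects \'etaleness (after localization at $\mathfrak{p}'\otimes 1$), forcing classical unramifiedness.

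For the totally ramified case, classically $f=1$ (so $k=\kappa$), $g=1$, and $e=[L:K]$. Using the resolutions $A\xrightarrow{\varpi^e}A\to A/\pi A$ and $A\xrightarrow{\varpi}A\to k$, one computes $k\otimes_R k\simeq k\oplus k[1]$ and $k\otimes_A k\simeq k\oplus k[1]$; the comparison map of these resolutions is the identity on $\pi_0$ and multiplication by $\varpi^{e-1}$ on $\pi_1$, vanishing for $e>1$. Therefore $I\simeq k[1]$, a shift of a $k$-module, and the ambient $(k\otimes_R k)$-module structure refines uniquely to the $k$-module structure: the potential obstruction---an action of $\pi_1(k\otimes_R k)$ on $I\simeq k[1]$ given by a map $k[2]\to k[1]$---vanishes since $\mathrm{Map}_k(k[2],k[1])=0$. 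The converse runs the computation backwards: if $f>1$ or $g>1$, then $\pi_0(I)\neq 0$ is a nontrivial $(k\otimes_\kappa k)$-module that cannot be $k$-linear via multiplication.

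For Proposition \ref{PropFF} on unramifiedness at $L$, take $k=L$. Since $L$ is a $K$-vector space, $L\otimes_R L\simeq L\otimes_K L$; since $A\to L$ is a localization, $L\otimes_A L\simeq L$. So $I_{A/R}^L$ is the augmentation ideal of $L\otimes_K L$, and $I_{A/R}^L\otimes_{L\otimes_K L}L\simeq 0$ because $L/K$ is a finite separable field extension (characteristic zero), so $L\otimes_K L$ is a product in which $L$ is a direct factor. I expect the main obstacle to be the converse of the totally ramified case---extracting concrete classical ramification data from the abstract $k$-module refinement condition---and rigorously justifying the ring-level base-change decompositions in the $E_\infty$ setting, not just on homotopy groups.
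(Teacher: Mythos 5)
Your reduction to the local case, your treatment of the unramified direction via the idempotent splitting of an \'etale extension, and your argument for unramifiedness at $L$ all track the paper's approach closely and are essentially fine. The genuine gap is in the totally ramified case, and it sits exactly where the paper has to work hardest. Your computation of the fiber is wrong: the map $k\otimes_Rk\to k\otimes_Ak$ is an isomorphism on $\pi_0$ and zero on $\pi_1$ (both sides being $k\oplus k[1]$), but the fiber of the \emph{zero} map $k[1]\to k[1]$ is $k[1]\oplus k$, not $k[1]$. The long exact sequence gives $\pi_0 I_{A/R}^k\cong k$ and $\pi_1 I_{A/R}^k\cong k$ (this is the paper's Lemma \ref{L2}); the degree-$0$ class you dropped is exactly $\Omega^1_{A/R}\otimes_A k$ in homotopical disguise. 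With the correct homotopy groups, your obstruction-theoretic shortcut collapses: $\pi_\ast(k\otimes_Rk)\cong\Lambda_k(\epsilon_1)$ acts on $\pi_\ast I\cong k\oplus k[1]$, and the potential action $\epsilon_1\colon\pi_0 I\to\pi_1 I$ is a map $k\to k$ in degree $+1$ that has no reason to vanish for degree reasons --- indeed it is \emph{nonzero} for the $k\otimes_Rk$-module $k\otimes_Rk$ itself, which has the same homotopy groups. Deciding that this action is trivial for $I_{A/R}^k$ is the actual content of the theorem, and the paper resolves it not by formality but by the explicit Eisenstein-polynomial computation of Theorem \ref{MainLemma}: one writes down a comparison map $\phi\colon k\otimes_A I\otimes_A k\to\Omega^1_{A/R}\otimes_A k$ to a manifestly induced $k$-module and proves $H_1(\phi)\neq 0$ by exhibiting the cycle $(\omega_1,\omega_2)$ built from $f(x)=x^e-pxg(x)-up$. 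Your closing worry is therefore misdirected: the hard step is the \emph{forward} implication (classically totally ramified $\Rightarrow$ $I_{A/R}^k$ induced), not the converse.

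On the converses, your sketches are plausible but thinner than what is needed, and the paper's route is worth noting because it avoids case analysis: it factors $A/R'/R$ with $R'/R$ unramified and $A/R'$ totally ramified, and uses the fiber sequence $I^k_{R'/R}\to I^k_{A/R}\to I^k_{A/R'}$ (Proposition \ref{PropTriple}) together with the observation (Proposition \ref{PropBoth}) that an extension which is simultaneously unramified and totally ramified at $k$ has $I^k\simeq 0$, forcing $R'=R$. If you want to keep your direct computational converse (``$\pi_0(I)$ is a $(k\otimes_\kappa k)$-module that cannot be $k$-linear when $f>1$''), you need to actually identify that module structure rather than assert it; the factorization argument is the cleaner path.
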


\noindent Note that this theorem describes the ramification of $\text{Spec}(A)/\text{Spec}(R)$ at \emph{every} point, even the generic point (where it is always unramified).

The deepest part of this theorem is as follows. If $A/R$ is totally ramified at $\mathfrak{p}$ with residue field $k=A/\mathfrak{p}^\prime$, we are asserting that $I_{A/R}^k$ has a $k$-module structure. In fact, we identify this $k$-module explicitly. Let $\Omega^1_{A/R}$ be the $A$-module of K\"ahler differentials.

\begin{theorem*}[\ref{MainLemma}]
If $A/R$ is totally ramified at $\mathfrak{p}$ with residue field $k$, then $I_{A/R}^k\cong\Omega^1_{A/R}\otimes_Ak$ as $k\otimes_Rk$-modules. That is, there is a fiber sequence $$\Omega^1\otimes_Ak\to k\otimes_Rk\to k\otimes_Ak.$$
\end{theorem*}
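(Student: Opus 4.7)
The plan is to reduce to the local case and use an Eisenstein presentation. Since $k = A/\mathfrak{p}'$, all the objects in the statement depend only on the completions of $R$ and $A$ at $\mathfrak{p}$ and $\mathfrak{p}'$; I may therefore assume $R$ is a complete DVR with uniformizer $\pi_R$ and $A = R[x]/f(x)$ for an Eisenstein polynomial $f$ of degree $e \geq 2$ (the ramification index), so $x$ is a uniformizer of $A$ and $\pi_R = x^e \cdot (\text{unit})$ in $A$.

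The key observation is that for any DVR $B$ with residue field $k$, the multiplication map $k \otimes_B k \to k$ has fiber $k[1]$: from the free resolution $B \xrightarrow{\pi_B} B$ of $k$ one computes that $k \otimes_B k$ is the square-zero extension of $k$ by $k[1]$. Applying this to $B = R$ and $B = A$ and mapping one fiber sequence to the other produces a commuting diagram of $k \otimes_R k$-module fiber sequences whose rightmost column is the identity of $k$; taking horizontal fibers (a $3\times 3$-lemma argument in the stable $\infty$-category of $k \otimes_R k$-modules) identifies
\[
I_{A/R}^k \;\simeq\; \text{fib}\bigl(k[1] \to k[1]\bigr).
\]
The map $k[1] \to k[1]$ is determined on $\pi_1$ by the image of the class $[\pi_R] \in \mathfrak{p}/\mathfrak{p}^2$ in $\mathfrak{p}'/(\mathfrak{p}')^2$. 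Under total ramification, $\pi_R = x^e \cdot (\text{unit}) \in (\mathfrak{p}')^2$, so the class vanishes and the map is null. Hence $I_{A/R}^k \simeq k \oplus k[1]$ as a $k$-module, which incidentally verifies the totally ramified condition of Definition \ref{DefTotalRam}.

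To finish, I compute $\Omega^1_{A/R} \otimes_A k$ from the Eisenstein presentation: $\Omega^1_{A/R} = A/(f'(x))$ is a nonzero cyclic torsion $A$-module since $v_A(f'(x)) > 0$, and base-changing along the resolution $A \xrightarrow{x} A$ of $k$ yields $\Omega^1_{A/R} \otimes_A k \simeq k \oplus k[1]$. Because $k$ is a field, $k$-module spectra are classified up to isomorphism by their homotopy groups, so $I_{A/R}^k \cong \Omega^1_{A/R} \otimes_A k$ as $k$-modules; restricting along $\mu$ gives the asserted isomorphism of $k \otimes_R k$-modules, and the fiber sequence in the statement follows from the defining fiber sequence for $I_{A/R}^k$.

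The main obstacle will be the $3 \times 3$ identification, which must be carried out in the $\infty$-category of $k \otimes_R k$-modules (not merely of $k$-modules or spectra): one must check that the augmentation fibers $k[1]$ and the map between them inherit compatible $k \otimes_R k$-module structures, and that this induced map really corresponds to the class $[\pi_R] \in \mathfrak{p}'/(\mathfrak{p}')^2$ rather than merely to a $k$-module map.
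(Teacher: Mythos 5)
Your route is genuinely different from the paper's and, in outline, it works. The paper proves the theorem by exhibiting an explicit map $\phi:k\otimes_AI\otimes_Ak\to\Omega^1\otimes_Ak$ (induced by $I\to I/I^2$), computing both sides' homology from the Eisenstein presentation, and then verifying $H_1(\phi)\neq 0$ by producing an explicit cycle $(\omega_1,\omega_2)\in I\oplus I$. You instead compare the two augmentation fiber sequences $F_B\to k\otimes_Bk\to k$ for $B=R,A$ and reduce everything to the single map $\pi_1(k\otimes_Rk)\to\pi_1(k\otimes_Ak)$, i.e.\ $\mathfrak{p}/\mathfrak{p}^2\to\mathfrak{p}'/(\mathfrak{p}')^2$, which vanishes because $\pi_R\in(\mathfrak{p}')^2$ when $e\geq 2$. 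That is a cleaner conceptual explanation of \emph{why} total ramification forces $I^k_{A/R}$ to be an induced $k$-module. The step you flag as the ``main obstacle'' is real but closable: $F_R$ and $F_A$ are $k\otimes_Rk$-modules concentrated in the single homotopy degree $1$, and any module over a connective ring spectrum concentrated in one degree is induced from $\pi_0$ (here $\pi_0(k\otimes_Rk)=k$ via $\mu$); moreover the adjunction $\text{Map}_{k\otimes_Rk}(\mu_*k[1],\mu_*k[1])\simeq\text{Map}_k(\text{THH}^R(k)\otimes_kk[1],k[1])$ has $\pi_0\cong k$, detected exactly by the effect on $\pi_1$, so a map of $k\otimes_Rk$-modules $F_R\to F_A$ that kills $\pi_1$ is null in $\text{Mod}_{k\otimes_Rk}$, and its fiber is the induced module $\mu_*(k\oplus k[1])$. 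Together with your computation $\Omega^1_{A/R}\otimes_Ak\simeq k\oplus k[1]$ this gives the stated isomorphism. (Do dispose of the degenerate case $e=1$, where both sides vanish, and justify the passage to completions as in Proposition \ref{PropComplete}.)

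The one substantive caveat is that your argument produces an \emph{abstract} isomorphism $I^k_{A/R}\cong\Omega^1_{A/R}\otimes_Ak$, obtained by matching two $k$-modules with the same homotopy groups. The version of Theorem \ref{MainLemma} proved in the body of the paper is stronger: it asserts that the specific composite
$$I\otimes_{A\otimes_RA}(k\otimes_Rk)\to\Omega^1\otimes_{A\otimes_RA}(k\otimes_Rk)\to\Omega^1\otimes_Ak$$
coming from the universal derivation is an equivalence. That naturality is exactly the ``lifting data'' consumed by Theorems \ref{ThmRam} and \ref{MainThm}, and it is the reason the paper must exhibit the nontrivial class $(\omega_1,\omega_2)$ in $H_1$ rather than argue by dimension count: a priori the natural map could be zero on $H_1$ even though abstract isomorphisms exist. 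So your proof establishes the statement as quoted (and the totally ramified condition of Definition \ref{DefTotalRam}), but to feed into the THH computation you would still need to check that the canonical map $I^k_{A/R}\to\Omega^1\otimes_Ak$ realizes your abstract equivalence, which brings back some version of the paper's $H_1$ computation.
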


\subsection{THH of rings of integers}
\noindent The ramification of an extension of commutative ring spectra $A/R$ is closely connected to topological Hochschild homology, which is the commutative ring spectrum $\text{THH}^R(A)=A\otimes_{A\otimes_RA}A$.

If $R$ and $A$ are ordinary rings, then (as usual) all tensor products are derived, and $\text{THH}^R(A)$ is a differential graded algebra. Its homology groups are the classical Hochschild homology $\text{HH}_\ast^R(A)=\text{THH}_\ast^R(A)$.

The relationship between THH and ramification is suggested by the close relationship between THH and loop spaces:

\begin{remark}
Geometrically, if $X=\text{Spec}(R)$ and $Y=\text{Spec}(A)$, then $\text{Spec}(\text{THH}^R(A))=Y\times^h_{Y\times^h_XY}Y=LY$, the \emph{free loop space} of $Y$ in $\text{Sch}_{/X}$.
\end{remark}

\begin{remark}
Many authors have used THH to study ramification (as an informal concept) of extensions arising in stable homotopy theory. These include Blumberg-Mandell \cite{BM}, who confirmed a philosophy of Hesselholt that $ku$ (connective complex $k$-theory) is a tamely ramified extension of its Adams summand $\ell$, Dundas-Lindenstrauss-Richter \cite{DLR} and H\"oning-Richter \cite{HR} who studied $ku/ko$ and various extensions related to $tmf$, and others.
\end{remark}

\noindent Our second main result uses Theorem \ref{ThmEquiv} to identify the spectrum $\text{THH}(R)$, where $R$ is a ring of integers in a number field.

\begin{theorem*}[\ref{MainThm}]
If $R$ is a ring of integers in a number field, $p$ is a prime, and $S$ is a commutative ring spectrum with a map to $\mathbb{Z}_p$, there is a homotopy fiber/cofiber sequence (only defined after $p$-completion) $$\text{THH}^{S[z]}(R)\otimes_R\Omega^1_{R/\mathbb{Z}}\to\text{THH}^S(\mathbb{Z})\otimes_\mathbb{Z}R\to\text{THH}^S(R)$$ of $\text{THH}^S(\mathbb{Z})\otimes_\mathbb{Z}R$-module spectra.
\end{theorem*}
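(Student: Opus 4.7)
The plan is to combine a base-change identification of $\text{THH}^S(R)$ with Theorem~\ref{MainLemma}, after a $p$-adic reduction to the totally ramified case. Writing $U = \mathbb{Z}\otimes_S\mathbb{Z}$ and $R^e = R\otimes_\mathbb{Z} R$, the factorization $R\otimes_S R = R\otimes_\mathbb{Z} U\otimes_\mathbb{Z} R$ together with the identification $R^e = (R\otimes_\mathbb{Z} U\otimes_\mathbb{Z} R)\otimes_U\mathbb{Z}$ (base change along the multiplication $U\to\mathbb{Z}$) yields
\[ \text{THH}^S(R) \;\cong\; R\otimes_{R^e} M, \qquad M := \text{THH}^S(\mathbb{Z})\otimes_\mathbb{Z} R. \]
The map in the theorem is then the natural $M \to R\otimes_{R^e} M$ induced by the unit $R^e\to R$, and its homotopy fiber is $M\otimes_{R^e} I$ with $I = \text{fib}(R^e\to R)$.

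After $p$-completion, $R$ splits as a product over primes above $p$, and each local factor $R_\mathfrak{p}$ fits into a tower $\mathbb{Z}_p \to \mathbb{Z}_p^{\text{ur}} \to R_\mathfrak{p}$ in which the first step is unramified and the second is a totally ramified Eisenstein extension $R_\mathfrak{p} = \mathbb{Z}_p^{\text{ur}}[z]/f$. By the unramified half of Theorem~\ref{ThmEquiv}, the unramified step passes transparently through the THH computation (it is effectively \'etale), reducing us to the totally ramified Eisenstein case, where the uniformizer $z$ provides the $S[z]$-algebra structure on $R$ appearing in the theorem.

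In this Eisenstein setting, Theorem~\ref{MainLemma} identifies the $k$-level fiber $\text{fib}(k\otimes_\mathbb{Z} k\to k\otimes_R k)$ with $\Omega^1_{R/\mathbb{Z}}\otimes_R k$. To upgrade this to the required identification of $M\otimes_{R^e} I$, I exploit the Koszul-dual structure of the Eisenstein presentation $R = \mathbb{Z}[z]/f$: since $f$ acts as zero on $R$, we have $R\otimes_{\mathbb{Z}[z]} R \simeq R[\epsilon]$ with $|\epsilon|=1$, and hence $\text{THH}^{S[z]}(R) \simeq R\otimes_{R[\epsilon]} R$, which for $S=\mathbb{S}$ recovers the algebra $R[y]$ with $|y|=2$ (matching the $R[\Omega S^3]$ in the abstract). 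Assembling the $\Omega^1_{R/\mathbb{Z}}$-factor from ramification (via Theorem~\ref{MainLemma}) with the $\text{THH}^{S[z]}(R)$-factor from the Koszul-dual side yields $M\otimes_{R^e} I \simeq \text{THH}^{S[z]}(R)\otimes_R \Omega^1_{R/\mathbb{Z}}$, as claimed. The principal obstacle is exactly this final assembly: one must promote the $k$-level fiber statement to an $R$-level identification of $I$ and verify that tensoring with $M$ over $R^e$ cleanly extracts the $\text{THH}^{S[z]}(R)$ factor; the $p$-completion is essential both for the local Eisenstein presentation to exist and for gluing the local pieces into the claimed module structure over $\text{THH}^S(\mathbb{Z})\otimes_\mathbb{Z} R$.
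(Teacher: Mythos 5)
Your setup is the right one and matches the paper's: inner base change identifies the fiber of $\text{THH}^S(\mathbb{Z})\otimes_{\mathbb{Z}}R\to\text{THH}^S(R)$ with $\text{THH}^S(R;I_{R/\mathbb{Z}})$, and after $p$-completion one factors off the unramified subextension (Proposition \ref{PropTriple} plus \'etale descent) to reduce to a totally ramified extension of complete DVRs, where Theorem \ref{MainLemma} applies. But the step you yourself flag as ``the principal obstacle'' is the entire content of the theorem, and your proposal does not close it. Two things are missing. First, you never construct a global comparison map: ``assembling the $\Omega^1$-factor with the $\text{THH}^{S[z]}(R)$-factor'' is not a map. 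The paper builds one (Theorem \ref{ThmRam}) from the \emph{lifting data} supplied by Theorem \ref{MainLemma}, namely the map $I_{R/\mathbb{Z}}\to\Omega^1_{R/\mathbb{Z}}$ of $R\otimes_{\mathbb{Z}}R$-modules, which induces $\rho:\text{THH}^S(R;I_{R/\mathbb{Z}})\to\text{THH}^{S[z]}(R;\Omega^1_{R/\mathbb{Z}})$; Theorem \ref{MainLemma} is precisely the statement that $\rho\otimes_R k$ is an equivalence.

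Second, and more seriously, an $R_p$-module map that becomes an equivalence after $\otimes_R k$ need not be an equivalence (a map whose fiber is the fraction field $L$ is invisible after $\otimes_R k$), so the residue-field check alone cannot finish the argument. The paper supplies a second check at the generic point: $\text{Ram}^S(R/\mathbb{Z})\otimes_R L\cong 0$ because $L/\mathbb{Q}$ is \'etale (Proposition \ref{PropFF}), and $\text{THH}^{S[z]}(R;\Omega^1_{R/\mathbb{Z}})\otimes_R L\cong 0$ because $\Omega^1_{R/\mathbb{Z}}$ is torsion; an $R$-module map that is an equivalence after $\otimes_R k$ for every residue field and after $\otimes_R L$ is an equivalence. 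Your proposal contains no rational input at all, so it cannot conclude. A minor further caution: your Koszul-duality aside conflates $\mathbb{Z}[z]$-linear duality (which yields a divided power algebra) with the identification $\text{THH}^{\mathbb{S}[z]}(R_p)\simeq R_p[\Omega S^3]$, which is B\"okstedt periodicity in the form proved by Krause--Nikolaus and is not formal; fortunately the theorem as stated does not require that identification.
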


\noindent Let us make clear what we mean by $\text{THH}^{S[z]}(R)$. If $S$ is a ring spectrum, we write $S[z]$ for the monoid ring spectrum $S[\mathbb{N}]=S\otimes\Sigma^\infty_{+}\mathbb{N}$.

The $p$-completion of a ring of integers is a DVR (discrete valuation ring), and we always regard $R_p$ as an $\mathbb{S}[z]$-algebra by mapping $z$ to a uniformizer. Then the theorem specifically asserts the existence of a fiber sequence $$\text{THH}^{S[z]}(R_p)\otimes_R\Omega^1_{R/\mathbb{Z}}\to\text{THH}^S(\mathbb{Z}_p)\otimes_{\mathbb{Z}_p}R_p\to\text{THH}^S(R_p)\cong\text{THH}^S(R)^\wedge_p.$$ An important special case is when $S=\mathbb{S}$ is the sphere spectrum. As the initial commutative ring spectrum, $\mathbb{S}$ admits a unique map to any other commutative ring spectrum. When $S=\mathbb{S}$, it is common practice to omit it from the notation, writing $\text{THH}(R)=\text{THH}^\mathbb{S}(R)$.

For formal reasons related to Thom spectra \cite{Blumberg}, $\text{THH}(\mathbb{Z}_p)$ is equivalent to the $\mathbb{E}_1$-group ring spectrum $\mathbb{Z}_p[\Omega S^3\langle 3\rangle]$, where $S^3\langle 3\rangle$ is the 3-connective cover of $S^3$. For similar reasons \cite{KN}, if $R$ is a ring of integers, then $\text{THH}^{\mathbb{S}[z]}(R_p)\cong R_p[\Omega S^3]$ as $\mathbb{E}_1$-ring spectra. Therefore, Theorem \ref{MainThm} describes $\text{THH}(R)^\wedge_p$ as a cofiber of explicitly described $R$-module spectra $$\text{THH}(R)^\wedge_p\cong\text{cofib}(R_p[\Omega S^3]\otimes_R\Omega^1_{R/\mathbb{Z}}\to R_p[\Omega S^3\langle 3\rangle]).$$ This is the sense in which Theorem \ref{MainThm} is a calculation of $\text{THH}(R)$.

\begin{warning}
Theorem \ref{MainThm} actually makes the stronger assertion that this cofiber sequence is of $R_p[\Omega S^3\langle 3\rangle]$-modules. However, we do not know whether the $R_p[\Omega S^3\langle 3\rangle]$-module structure on $R_p[\Omega S^3]\otimes_R\Omega^1_{R/\mathbb{Z}}$ agrees with the canonical one given by the covering map $\Omega S^3\langle 3\rangle\to\Omega S^3$.
\end{warning}

\begin{remark}
The homotopy groups $\text{THH}_\ast(R)=\pi_\ast\text{THH}(R)$ are already known. They were originally calculated by Lindenstrauss-Madsen \cite{LM} using computational techniques. A simpler calculation by Krause-Nikolaus \cite{KN} uses $\text{THH}_\ast^{\mathbb{S}[z]}(R_p)\cong R_p[x]$ as we do, but is otherwise unrelated to our methods.

In contrast, our proof of Theorem \ref{MainThm} is not calculational, and similar arguments can be used for any extension which is a composite of unramified and totally ramified extensions. The calculations take place in the proof of Theorem \ref{ThmEquiv} (Section 2), which involves algebraic number theory but no homotopy theory.
\end{remark}

\noindent Let us say a word about the homotopy groups $\text{THH}_\ast(R)$. If $K$ is the number field associated to $R$, the inverse different is the fractional ideal $$D^{-1}=\{x\in K|\text{tr}(xy)\in\mathbb{Z},\text{ }\forall y\in R\}.$$ Notice $R\subseteq D^{-1}$. Equivalently, $D^{-1}\cong\text{Hom}_\mathbb{Z}(R,\mathbb{Z})$, and $R\subseteq\text{Hom}_\mathbb{Z}(R,\mathbb{Z})$ via the trace. Lindenstrauss-Madsen \cite{LM} proved $$\text{THH}_\ast(R)\cong\begin{cases}R\text{ if }\ast=0, \\ D^{-1}/nR\text{ if }\ast=2n-1>0, \\ 0\text{ otherwise.}\end{cases}$$ This is related to our results as follows: The homotopy fiber sequence $R[\Omega S^3]\otimes_R\Omega^1_{R/\mathbb{Z}}\to R[\Omega S^3\langle 3\rangle]\to\text{THH}(R)$ induces a long exact sequence in $\pi_\ast$ (after $p$-completion), the first few terms of which are written out below. $$\xymatrix{
\cdots\ar[r] &H_\ast(\Omega S^3;\Omega^1_{R/\mathbb{Z}})\ar[r] &H_\ast(\Omega S^3\langle 3\rangle;R)\ar[r] &\text{THH}_\ast(R)\ar[r] &\cdots \\
H_0 &D^{-1}/R\ar[r] &R\ar[r] &R & \\
H_1 &0\ar[r] &R/1R\ar[r] &\text{THH}_1(R)\ar[llu] & \\
H_2 &D^{-1}/R\ar[r] &0\ar[r] &0\ar[llu] & \\
H_3 &0\ar[r] &R/2R\ar[r] &\text{THH}_2(R).\ar[llu] &
}$$ We are using the well-known formulas (which can be obtained from the Serre spectral sequence) $H_\ast(\Omega S^3;\mathbb{Z})\cong\mathbb{Z}[x]$, where $x$ is a degree 2 generator, and $$H_\ast(\Omega S^3\langle 3\rangle;\mathbb{Z})\cong\begin{cases}\mathbb{Z}\text{ if }\ast=0, \\ \mathbb{Z}/n\text{ if }\ast=2n-1>0, \\ 0\text{ otherwise.}\end{cases}$$ Therefore, we have short exact sequences $$0\to R/nR\to\text{THH}_{2n-1}(R)\to D^{-1}/R\to 0,$$ identifying $\text{THH}_{2n-1}(R)\cong D^{-1}/nR$.

\subsection{Ramified descent}
\noindent In another sense, Theorem \ref{MainThm} can be regarded as a generalization of the \'etale descent theorem of Geller-Weibel \cite{WG}, McCarthy-Minasian \cite{MM}, and Mathew \cite{Akhil} (in increasing order of generality):

\begin{theorem*}
If $S\to R\to A$ are commutative ring spectra and $A/R$ is \'etale (in particular, unramified), then the following map is an equivalence: $$\text{THH}^S(R)\otimes_RA\to\text{THH}^S(A).$$
\end{theorem*}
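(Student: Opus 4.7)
The plan is to exploit the defining property of étale maps of $\mathbb{E}_\infty$-rings: if $A/R$ is étale, then the multiplication $\mu: A \otimes_R A \to A$ is an equivalence (see Lurie's HA, and note this is a strictly stronger condition than the unramifiedness of Definition \ref{DefTotalRam}). Everything else will follow formally from this property together with standard manipulations of homotopy pushouts in $\text{CAlg}_S$.

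The key technical step is to show that $A \otimes_S A$ is an \emph{idempotent} $(R \otimes_S R)$-algebra, i.e.\ that the multiplication
\[
(A \otimes_S A) \otimes_{R \otimes_S R} (A \otimes_S A) \longrightarrow A \otimes_S A
\]
is an equivalence. I would prove this by rearranging the iterated pushout: viewing $A \otimes_S A = A \sqcup_S A$ and $R \otimes_S R = R \sqcup_S R$ in $\text{CAlg}_S$, the left-hand side decomposes as $(A \sqcup_R A) \sqcup_S (A \sqcup_R A) = (A \otimes_R A) \otimes_S (A \otimes_R A)$ by pairing up corresponding factors along the two maps $R \to A$. Applying the étale hypothesis $A \otimes_R A \cong A$ twice then collapses this to $A \otimes_S A$.

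Once idempotence is established, I would invoke the general fact (a standard tool in Mathew's paper \cite{Akhil}) that for an idempotent morphism $B \to C$ in $\text{CAlg}$ and any pair of $C$-modules $M, N$, the natural map $M \otimes_B N \to M \otimes_C N$ is an equivalence. Specializing to $B = R \otimes_S R$, $C = A \otimes_S A$, and $M = N = A$ yields
\[
\text{THH}^S(A) \;=\; A \otimes_{A \otimes_S A} A \;\cong\; A \otimes_{R \otimes_S R} A.
\]
A final routine manipulation identifies the right-hand side:
\[
A \otimes_{R \otimes_S R} A \;\cong\; A \otimes_R (R \otimes_{R \otimes_S R} R) \otimes_R A \;\cong\; \text{THH}^S(R) \otimes_R (A \otimes_R A) \;\cong\; \text{THH}^S(R) \otimes_R A,
\]
where the last step absorbs one $A$-factor via the étale condition once more.

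The main obstacle will be carrying out the idempotence step rigorously in the homotopy-coherent setting: the rearrangement of iterated pushouts must be done as an equivalence of $(R \otimes_S R)$-algebras, not merely of underlying $S$-modules, and one should verify that the resulting chain of equivalences composes (up to canonical homotopy) with the comparison map $\text{THH}^S(R) \otimes_R A \to \text{THH}^S(A)$ that the theorem asserts is an equivalence. Both points should be bookkeeping, since all manipulations live in the symmetric monoidal $\infty$-category $\text{CAlg}_S$ and the relevant universal properties propagate automatically.
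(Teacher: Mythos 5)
This statement is quoted from the literature; the paper does not prove it, but cites Weibel--Geller \cite{WG}, McCarthy--Minasian \cite{MM}, and Mathew \cite{Akhil} (and uses it as Example \ref{ExED}). So the comparison here is really between your argument and the standard proofs.

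Your proof has a genuine gap at the very first step: it is \emph{not} true that $A/R$ \'etale implies the multiplication $A\otimes_RA\to A$ is an equivalence. The simplest counterexample is a nontrivial finite separable field extension, say $\mathbb{Q}(i)/\mathbb{Q}$, which is \'etale but has $\mathbb{Q}(i)\otimes_{\mathbb{Q}}\mathbb{Q}(i)\cong\mathbb{Q}(i)\times\mathbb{Q}(i)\not\cong\mathbb{Q}(i)$; more generally, for \'etale maps the diagonal $\mathrm{Spec}(A)\to\mathrm{Spec}(A\otimes_RA)$ is a clopen immersion, not an isomorphism. The condition you assume is exactly what this paper calls being a \emph{solid} $R$-algebra, and Proposition \ref{PropExample} shows it is equivalent to $A/R$ being totally ramified at $A$ --- a condition that is neither implied by nor equivalent to \'etaleness (it characterizes ring epimorphisms such as localizations). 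The remainder of your argument (the pushout interchange giving idempotence of $A\otimes_SA$ over $R\otimes_SR$, the base-change lemma for idempotent algebras, and the final identification $A\otimes_{R\otimes_SR}A\cong\mathrm{THH}^S(R)\otimes_RA$) is formally correct \emph{granted} the premise, so what you have actually proved is \'etale descent for solid extensions only. That special case is essentially trivial here, since for solid $A/R$ one has $I_{A/R}\cong 0$ and hence $\mathrm{Ram}^S(A/R)\cong\mathrm{THH}^S(A;I_{A/R})\cong 0$ directly.

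To repair the argument you need a genuinely different input. In the discrete flat case (Weibel--Geller) one uses that the clopen diagonal makes $A$ a projective $A\otimes_RA$-module, so the relevant Tor groups vanish; in the spectral setting, Mathew's proof \cite{Akhil} identifies $\mathrm{THH}^S(A)$ with the tensoring $S^1\otimes_SA$ in $\mathrm{CAlg}_S$ and uses the rigidity of \'etale morphisms (the \'etale site is invariant under maps of connective, or more generally arbitrary, $\mathbb{E}_\infty$-rings inducing isomorphisms on $\pi_0$, and $R\to\mathrm{THH}^S(R)$ is such a map) to show that $\mathrm{THH}^S(R)\otimes_RA$ already has the universal property of $S^1\otimes_SA$. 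Neither route passes through idempotence of $A$ over $R$.
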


\noindent If $R\to A$ is not necessarily \'etale but has well-behaved ramification, we might ask how close this map is to an equivalence.

We introduce notation $\text{Ram}^S(A/R)$ for the homotopy fiber of the map $\text{THH}^S(R)\otimes_RA\to\text{THH}^S(A)$. Then \'etale descent states $\text{Ram}^S(A/R)\cong 0$ if $A/R$ is \'etale, while Theorem \ref{MainThm} asserts $$\text{Ram}^S(R/\mathbb{Z})^\wedge_p\cong\text{THH}^{S[z]}(R_p)\otimes_R\Omega^1_{R/\mathbb{Z}}$$ if $R$ is a ring of integers. We express the close relationship between ramification and THH in two results (proved just by unpacking definitions):

\begin{proposition*}[\ref{PropEasy}]
$A/R$ is unramified at $k$ if and only if $$\text{Ram}^R(A/R)\otimes_Ak\cong 0.$$
\end{proposition*}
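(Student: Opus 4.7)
The plan is to identify $\text{Ram}^R(A/R)\otimes_A k$ with $I_{A/R}^k\otimes_{k\otimes_R k} k$ directly, so that the two vanish simultaneously. I would proceed in three short steps, each essentially just unpacking definitions.

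First, I would unpack $\text{Ram}^R(A/R)$. Since $R\otimes_R R\simeq R$, we have $\text{THH}^R(R)\otimes_R A\simeq A$, so $\text{Ram}^R(A/R)$ is the fiber of the unit map $A\to\text{THH}^R(A)$. Tensoring the fiber sequence $\text{Ram}^R(A/R)\to A\to\text{THH}^R(A)$ with $k$ over $A$ presents $\text{Ram}^R(A/R)\otimes_A k$ as the fiber of $k\to\text{THH}^R(A)\otimes_A k$.

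Second, I would tensor the defining fiber sequence $I_{A/R}^k\to k\otimes_R k\to k\otimes_A k$ with $k$ over $k\otimes_R k$ along the multiplication $\mu\colon k\otimes_R k\to k$; this presents $I_{A/R}^k\otimes_{k\otimes_R k} k$ as the fiber of $k\to(k\otimes_A k)\otimes_{k\otimes_R k} k$.

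The crux is then to identify the two targets, namely to show that $(k\otimes_A k)\otimes_{k\otimes_R k} k\simeq\text{THH}^R(A)\otimes_A k$ and that the two natural maps from $k$ agree. Both identifications follow from the base-change formula
$$(k\otimes_R k)\otimes_{A\otimes_R A} A\simeq k\otimes_A k,$$
which is standard: both sides compute the geometric realization of the simplicial object $[n]\mapsto k\otimes_R A^{\otimes_R n}\otimes_R k$, using the bar resolution of $A$ as an $A\otimes_R A$-module on one side and as an $A$-bimodule on the other. Tensoring this identity once more with $k$ over $k\otimes_R k$ gives
$$(k\otimes_A k)\otimes_{k\otimes_R k} k\simeq (k\otimes_R k)\otimes_{A\otimes_R A} A\otimes_{k\otimes_R k} k\simeq A\otimes_{A\otimes_R A} k\simeq\text{THH}^R(A)\otimes_A k,$$
and tracing through the identifications shows that the two maps $k\to\text{THH}^R(A)\otimes_A k$ in question are both obtained by applying $-\otimes_{A\otimes_R A} k$ to the multiplication $A\otimes_R A\to A$. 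Consequently $\text{Ram}^R(A/R)\otimes_A k\simeq I_{A/R}^k\otimes_{k\otimes_R k} k$ as fibers of the same map, which gives the biconditional.

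The only real content is the base-change formula; once that is in hand, everything else is a matter of correctly bookkeeping the fiber sequences, consistent with the paper's claim that this proposition is proved just by unpacking definitions.
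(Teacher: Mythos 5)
Your argument is correct and rests on the same base-change identity that underpins the paper's proof (Lemma \ref{L1}), though you apply it to the trivial bimodule $A$ in order to identify two presentations of $\text{THH}^R(A)\otimes_Ak$ as a common target of fiber sequences, rather than invoking inner base change (Proposition \ref{PropInnerBase}) directly on $I_{A/R}$ as the paper does via Remark \ref{RmkBlah2}. The two routes are equivalent in content; the paper's is shorter only because it has already packaged the needed identification as $\text{Ram}^S(A/R)\otimes_Ak\simeq\text{THH}^S(k;I^k_{A/R})=I^k_{A/R}\otimes_{k\otimes_Rk}k$.
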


\begin{proposition*}[\ref{PropKey}]
If $A/R$ is totally ramified at $k$, then $$\text{Ram}^S(A/R)\otimes_Ak\cong\text{THH}^S(k)\otimes_kI^k_{A/R}$$ as $\text{THH}^S(R)\otimes_Rk$-modules.
\end{proposition*}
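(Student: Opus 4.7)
The plan is to identify $\text{Ram}^S(A/R)\otimes_A k$ by a sequence of base-change manipulations, and invoke the totally ramified hypothesis only at the very last step.

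Since $\text{THH}^S(R)\otimes_R M\cong R\otimes_{R\otimes_S R}M$ for any $R$-module $M$, and tensor products of spectra are exact in each variable, I first rewrite
\[
\text{Ram}^S(A/R)\otimes_A k \;\cong\; \text{fib}\bigl(R\otimes_{R\otimes_S R}k \longrightarrow A\otimes_{A\otimes_S A}k\bigr).
\]
The ring map $R\otimes_S R\to k$ factors as $R\otimes_S R\to k\otimes_S k\to k$, and the standard identity $R\otimes_{R\otimes_S R}(M\otimes_S N)\cong M\otimes_R N$ for $R$-modules $M,N$ (i.e., the diagonal realizes the internal tensor product), applied to $M=N=k$, gives
\[
R\otimes_{R\otimes_S R}k \;\cong\; (k\otimes_R k)\otimes_{k\otimes_S k}k.
\]
The analogous computation for $A$ yields $A\otimes_{A\otimes_S A}k\cong (k\otimes_A k)\otimes_{k\otimes_S k}k$. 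Taking fibers of the comparison map and using exactness of $(-)\otimes_{k\otimes_S k}k$ then gives
\[
\text{Ram}^S(A/R)\otimes_A k \;\cong\; I^k_{A/R}\otimes_{k\otimes_S k}k,
\]
an identity that holds without any ramification assumption (and which, upon taking $S=R$, recovers Proposition \ref{PropEasy}).

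At this stage the totally ramified hypothesis enters: by Definition \ref{DefTotalRam}, $I^k_{A/R}$ admits a $k$-module structure restricting to its $(k\otimes_R k)$-structure along multiplication $k\otimes_R k\to k$. In particular, since $k\otimes_S k\to k\otimes_R k\to k$ is just multiplication in $k$, the $(k\otimes_S k)$-action on $I^k_{A/R}$ factors through $k$, so
\[
I^k_{A/R}\otimes_{k\otimes_S k}k \;\cong\; I^k_{A/R}\otimes_k\bigl(k\otimes_{k\otimes_S k}k\bigr) \;\cong\; I^k_{A/R}\otimes_k \text{THH}^S(k),
\]
which is the claimed equivalence.

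Both sides are naturally modules over $(k\otimes_R k)\otimes_{k\otimes_S k}k\cong \text{THH}^S(R)\otimes_R k$, and the equivalence respects this structure via the algebra map to $\text{THH}^S(k)$ induced by $k\otimes_R k\to k$. The main obstacle is not any single step but rather bookkeeping the compatible module structures through the chain of base changes; the algebraic identities themselves are formal, and the ramification hypothesis is used precisely once, to collapse the $(k\otimes_S k)$-action to a $k$-action in the last step.
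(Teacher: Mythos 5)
Your proof is correct and follows essentially the paper's approach. The paper's proof is shorter only because it cites Remark \ref{RmkBlah2} (inner base change) to obtain $\text{Ram}^S(A/R)\otimes_Ak\cong\text{THH}^S(k;I^k_{A/R})=k\otimes_{k\otimes_Sk}I^k_{A/R}$, whereas you rederive this identification from scratch via the identity $R\otimes_{R\otimes_SR}(M\otimes_SN)\cong M\otimes_RN$; the decisive step -- using the totally ramified hypothesis to factor the $k\otimes_Sk$-action on $I^k_{A/R}$ through $k$ and thereby split off $\text{THH}^S(k)$ -- is identical in both.
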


\begin{remark}
Geometrically, we think of $X=\text{Spec}(R)$ and $Y=\text{Spec}(A)$. Consider the map $f:LY\to LX\times_XY$. The lifting problem for $f$ asks, given a loop in $X$ and a lift of its basepoint to $Y$, whether the loop lifts to $Y$. Then $\text{Ram}^S(A/R)$ corresponds to the sheaf $\mathcal{R}$ of functions defined on those loops in $LX\times_XY$ which do not lift to $LY$; that is, the fiber of $\mathcal{O}(LX\times_XY)\to f_\ast\mathcal{O}(LY)$.

The last two propositions state that $\mathcal{R}$ can be directly calculated anywhere that $Y/X$ is either unramified or totally ramified.
\end{remark}

\noindent In the event that $A/R$ factors as a composite of unramified and totally ramified extensions, these results should in principle allow us to calculate $\text{Ram}^S(A/R)$ itself via a local-to-global argument or \emph{ramified descent}. The key lemma that makes this possible is:

\begin{proposition*}[\ref{PropTriple}]
If $A\to B\to C\to k$ are commutative ring spectra, then the following are fiber sequences: $$I_{B/A}^k\to I_{C/A}^k\to I_{C/B}^k,$$ $$\text{Ram}^S(B/A)\otimes_BC\to\text{Ram}^S(C/A)\to\text{Ram}^S(C/B).$$
\end{proposition*}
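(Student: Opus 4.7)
My plan is to treat both fiber sequences as formal consequences of the octahedral axiom in the stable $\infty$-category of spectra, applied to appropriate composable pairs of maps. No computation is required; the content is entirely in unpacking the definitions.

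For the first fiber sequence, consider the composite
$$k\otimes_A k \longrightarrow k\otimes_B k \longrightarrow k\otimes_C k$$
induced by the maps $A\to B\to C$. By Definition \ref{DefTotalRam}, $I^k_{B/A}$ is the fiber of the first arrow, $I^k_{C/A}$ is the fiber of the composite, and $I^k_{C/B}$ is the fiber of the second arrow. The octahedral axiom then yields the asserted fiber sequence $I^k_{B/A}\to I^k_{C/A}\to I^k_{C/B}$.

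For the second fiber sequence, I first rewrite $\mathrm{Ram}^S(B/A)\otimes_B C$ in a convenient form. Since $(-)\otimes_B C$ is exact (being a colimit-preserving functor between stable $\infty$-categories), applying it to the defining fiber sequence $\mathrm{Ram}^S(B/A)\to\mathrm{THH}^S(A)\otimes_A B\to\mathrm{THH}^S(B)$ and using associativity $(\mathrm{THH}^S(A)\otimes_A B)\otimes_B C \cong \mathrm{THH}^S(A)\otimes_A C$ identifies $\mathrm{Ram}^S(B/A)\otimes_B C$ with the fiber of $\mathrm{THH}^S(A)\otimes_A C\to\mathrm{THH}^S(B)\otimes_B C$.

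Now consider the composite
$$\mathrm{THH}^S(A)\otimes_A C \longrightarrow \mathrm{THH}^S(B)\otimes_B C \longrightarrow \mathrm{THH}^S(C),$$
whose total composite is the canonical map defining $\mathrm{Ram}^S(C/A)$ as its fiber. By the previous paragraph, the first map has fiber $\mathrm{Ram}^S(B/A)\otimes_B C$, the composite has fiber $\mathrm{Ram}^S(C/A)$, and the second map has fiber $\mathrm{Ram}^S(C/B)$. A second application of the octahedral axiom gives the fiber sequence
$$\mathrm{Ram}^S(B/A)\otimes_B C\longrightarrow \mathrm{Ram}^S(C/A)\longrightarrow \mathrm{Ram}^S(C/B),$$
as desired. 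The only point requiring any real care is checking that the two-out-of-three composition of structure maps really equals the canonical map on $\mathrm{THH}$, but this is immediate from the naturality of the construction $R\mapsto\mathrm{THH}^S(R)$. There is no serious obstacle here; the proposition is essentially an instance of the octahedral axiom applied twice.
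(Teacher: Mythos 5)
Your proof is correct and is essentially the paper's argument: the paper's diagram of pasted homotopy pullback squares is precisely the octahedral axiom applied to the composites $k\otimes_Ak\to k\otimes_Bk\to k\otimes_Ck$ and $\mathrm{THH}^S(A)\otimes_AC\to\mathrm{THH}^S(B)\otimes_BC\to\mathrm{THH}^S(C)$. The only cosmetic difference is that the paper obtains the second fiber sequence by applying the exact functor $\mathrm{THH}^S(C;-)$ to the first (with $k=C$) rather than rerunning the octahedral argument, but this buys nothing essential.
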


\noindent This is precisely the strategy used to prove Theorem \ref{MainThm}. Indeed, any extension of number fields factors (at each prime) as an unramified extension followed by a totally ramified extension. Therefore, we can use this strategy to calculate $\text{Ram}(R/\mathbb{Z})$ when $R$ is a ring of integers in a number field.

\subsection{Acknowledgments, previous work, and questions}
\noindent This paper benefited from conversations with Owen Gwilliam, Jeremy Hahn, Achim Krause, Mike Mandell, and others. Special thanks are due to Rok Gregoric for detailed feedback and geometric intuition, and Andrew Blumberg for his continued support.

\begin{remark}\label{RmkGeneral}
We are certainly not the first to study the spectra $\text{Ram}^S(A/R)$. In the case $S=R$, $\text{Ram}^R(A/R)$ is just the fiber of $A\to\text{THH}^R(A)$, which is not far from $\text{THH}^R(A)$ itself. (It is the desuspension of reduced THH.) Dundas-Lindenstrauss-Richter \cite{DLR} and others have studied reduced THH as a measure of ramification of $A/R$.

Our $\text{Ram}^S(A/R)$ is a more refined measure of ramification, in the sense that $\text{Ram}^R(A/R)$ can be reconstructed from it. That is, for any map $S\to S^\prime$ of commutative ring spectra over $R$, $$\text{Ram}^{S^\prime}(A/R)\cong\text{Ram}^S(A/R)\otimes_{\text{THH}^S(S^\prime)}S^\prime.$$ (Corollary \ref{CorOuterBase}) Therefore, the most refined of all these invariants is $\text{Ram}^\mathbb{S}(A/R)$ and the coarsest is $\text{Ram}^R(A/R)$, or reduced THH.
\end{remark}

\noindent Weibel-Geller \cite{WG} were the first to prove \'etale descent $\text{THH}^\mathbb{Z}(R)\otimes_R A\cong\text{THH}^\mathbb{Z}(A)$ when $R$ and $A$ are ordinary (discrete) rings; they do so precisely by showing that $\text{Ram}^\mathbb{Z}(R/\mathbb{Z})$ vanishes at each prime. Our calculation of $\text{Ram}^\mathbb{S}(R/\mathbb{Z})$ when $R$ is a ring of integers generalizes their ideas.

In this paper, we mostly consider ramification for rings of integers in number fields, but there are a few results that are more general. Notably:

\begin{proposition*}[\ref{PropExample}]
If an extension $A/R$ is totally ramified everywhere (that is, at $k=A$), then it is also unramified everywhere, and $A\otimes_RA\to A$ is an equivalence.
\end{proposition*}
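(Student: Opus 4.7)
The plan is to show the fiber $I := I^A_{A/R} = \mathrm{fib}(\mu: A\otimes_R A\to A)$ vanishes. Both conclusions then follow immediately: $\mu$ is an equivalence (its fiber is zero), and $I\otimes_{A\otimes_R A}A\simeq 0$ gives that $A/R$ is unramified at $k = A$.

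First I would unpack the hypothesis. Totally ramified at $k=A$ says the $A\otimes_R A$-action on $I$ factors through $\mu$, i.e., $I\simeq\mu^* I_A$ for some $A$-module $I_A$. Since $\mu$ admits the $R$-algebra section $\eta_L : a\mapsto a\otimes 1$, the fiber sequence $I\to A\otimes_R A\to A$ splits in $A$-modules, and the hypothesis upgrades this to an equivalence of $R$-algebras $A\otimes_R A\simeq A\oplus I$, where the right-hand side is the trivial split square-zero extension of $A$ by the $A$-module $I$.

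Next, I would exploit the universal property of $A\otimes_R A$ as the coproduct $A\amalg_R A$ in commutative $R$-algebras. The second coproduct inclusion $\eta_R$ differs from $\eta_L$ by an $R$-derivation $\partial: A\to I$, classified by an $A$-module map $L_{A/R}\to I$. Comparing with the universal trivial square-zero extension $A\oplus L_{A/R}$---itself a commutative $R$-algebra with a canonical pair of inclusions from $A$---the coproduct property yields an $R$-algebra map $A\otimes_R A\to A\oplus L_{A/R}$ whose restriction to the augmentation ideal serves as an inverse, giving a comparison equivalence $I\simeq L_{A/R}$ of $A$-modules.

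Finally, applying $\mu_! = -\otimes_{A\otimes_R A}A$ to the fiber sequence and using the retraction of $A\to T := \mathrm{THH}^R(A)$, I obtain $T\simeq A\oplus\Sigma(I_A\otimes_A T)$, which iterates to $T\simeq\bigoplus_{n\geq 0}\Sigma^n I_A^{\otimes_A n}$. The main obstacle is deducing $I_A\simeq 0$ from this structure. I expect this to follow from the $R$-algebra structure on $T$ together with the rigidity of $A\otimes_R A$ as coproduct: applying the coproduct universal property iteratively to higher tensor powers $A^{\otimes_R n}$, whose augmentation ideals inherit an analogous square-zero property, should force every positive tensor power of $I_A$ to vanish, yielding $I_A\simeq 0$ and hence $I\simeq 0$.
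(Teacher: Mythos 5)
Your route is genuinely different from the paper's, and the first three steps are a nice reformulation — but the final step is a gap, not a proof, and the heuristic you offer will not close it.

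Here is the comparison. The paper's argument is a short module-theoretic one: the hypothesis is that the $A\otimes_RA$-module structure on $I$ — and hence, via the fiber sequence $I\to A\otimes_RA\to A$, on $A\otimes_RA$ itself — factors through $\mu:A\otimes_RA\to A$. Once this is known, the unique $A$-module map $i:A\to A\otimes_RA$ sending $1\mapsto 1$ is automatically $A\otimes_RA$-linear; then $i\circ\mu$ is an $A\otimes_RA$-linear endomorphism of the free rank-one module $A\otimes_RA$ sending $1\mapsto 1$, hence equals the identity, and $\mu\circ i\simeq\mathrm{id}$ since $i\simeq\eta_L$. So $\mu$ is an equivalence and $I\simeq 0$, done. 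No cotangent complex, no square-zero extensions, no THH. Your version instead reinterprets the hypothesis as saying $A\otimes_RA$ is the trivial square-zero $R$-algebra extension $A\oplus I$ and identifies $I\simeq L_{A/R}$ by comparing the coproduct universal property of $A\otimes_RA$ with the corepresentability of derivations; that identification is a genuinely correct and pleasant observation, but it is doing extra work the paper avoids.

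The gap is in the last paragraph. From the split fiber sequence $T\otimes_AI_A\to A\to T$ you correctly get $T\simeq A\oplus\Sigma(T\otimes_AI_A)$ and hence $T\simeq\bigoplus_{n\geq 0}\Sigma^nI_A^{\otimes_An}$ by connectivity. But this decomposition is not a contradiction waiting to happen: THH having exactly this ``free on a shifted module'' shape is the typical outcome, not an anomaly. For instance $\mathrm{THH}^{\mathbb{F}_p}(\mathbb{F}_p[x]/x^2)$, or more to the point the author's $\mathrm{THH}^{\mathbb{S}[z]}(A_p)\simeq A_p[\Omega S^3]$, realize exactly such a pattern with a nonzero ``$I_A$''. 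So nothing about the formula forces $I_A\simeq 0$, and the appeal to ``rigidity of $A\otimes_RA$ as coproduct'' applied to higher tensor powers $A^{\otimes_Rn}$ is not justified either: the hypothesis is a statement about $I_{A/R}=I^A_{A/R}$ only, and the augmentation ideal of $A^{\otimes_R3}$ does not ``inherit an analogous square-zero property'' from it — indeed $A^{\otimes_R3}\simeq(A\oplus L)\otimes_A(A\oplus L)$ has augmentation ideal $L\oplus L\oplus L\otimes_AL$, whose square contains $L\otimes_AL$. You would have to supply an actual input, and I do not see one along this route. I would recommend abandoning the THH bootstrap and instead using the observation you already have in step one: the hypothesis makes both coproduct inclusions $\eta_L,\eta_R:A\to A\otimes_RA$ into $A$-module maps for the same (induced) module structure sending unit to unit, so $\eta_L\simeq\eta_R\simeq i$; then $i\circ\mu$ and $\mathrm{id}_{A\otimes_RA}$ are both $A\otimes_RA$-linear and both send $1\mapsto 1$, which kills $I$ directly.
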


\noindent A number field is ramified at only finitely many primes. The last proposition can be regarded as a vast generalization of this statement -- it is impossible for any extension $A/R$ to be ramified \emph{everywhere} except in the degenerate case that it has Galois group $0$ (that is, $A\otimes_RA\cong A$).

Aside from what we consider in this paper, there are many interesting questions about ramification for spectra that arise in chromatic homotopy theory. If $A/R$ is a Galois extension of commutative ring spectra in the sense of Rognes \cite{Rognes}, the extension of connective covers is generally not a Galois extension, \cite{Akhil2} Theorem 6.17. H\"oning and Richter \cite{HR} propose studying the ramification of these extensions. For example:

\begin{question}
Where are the extensions $ku/ko$ and $ku/\ell$ unramified or totally ramified? Can ramified descent be used to relate $\text{THH}(ku)$ to $\text{THH}(ko)$ and $\text{THH}(\ell)$? What about analogous questions for $tmf$?
\end{question}

\noindent From another direction, we used ramification of $A/R$ to reduce the study of $\text{THH}(A)$ to $\text{THH}(R)$. If $A/R$ is \'etale (therefore unramified), \'etale descent methods allows us to study $A$ in terms of $R$ for a variety of homology theories: TP, TC, algebraic K-theory, etc.

\begin{question}
If $A/R$ is totally ramified, are there ramified descent formulas (analogous to \'etale descent) which relate $TP(R)$ and $TP(A)$, $TC(R)$ and $TC(A)$, $K(R)$ and $K(A)$, etc.?
\end{question}

\section{Number theory}
\begin{notation}
If $R$ is a commutative ring and $M$ an $R$-module, we also write $R$ for the Eilenberg-Maclane ring spectrum, $M$ for the Eilenberg-Maclane $R$-module spectrum. When we write $A_0\otimes_RA_1$, we mean the tensor product of spectra, or the derived tensor product $\pi_\ast(A_0\otimes_RA_1)\cong\text{Tor}_\ast^R(A_0,A_1)$. If instead we want to express the ordinary tensor product of discrete $R$-modules, we will use the notation $A_0\otimes^0_RA_1:=\text{Tor}_0^R(A_0,A_1)$.
\end{notation}

\noindent We will first summarize some basic number theory; see \cite{Neukirch} for reference.

Throughout this section, let $R\to A$ be a totally ramified extension of complete DVRs in mixed characteristic, with uniformizers $p\in R$ and $\pi\in A$. Then $A\cong R[\pi]/(f(\pi))$, the ideal $(p)=(\pi^e)\subseteq A$ for some $e\geq 1$ (the \emph{ramification index} of $p$), and $f(x)$ is an Eisenstein polynomial of degree $e$.

Since $A/R$ is flat, $A\otimes_RA\cong A\otimes_R^0A$. Let $I$ be the kernel of multiplication $A\otimes_RA\to A$, or equivalently the homotopy fiber since the multiplication map is surjective. So $I$ is an ideal of $A\otimes_RA$ and $(A\otimes_RA)/I\cong A$. The $A$-module of K\"ahler differentials is $$\Omega^1_{A/R}=I/I^2\cong I\otimes^0_{A\otimes_RA}(A\otimes_RA)/I\cong I\otimes^0_{A\otimes_RA}A.$$ If we identify $A\otimes_RA$ with $R[\pi_1,\pi_2]/(f(\pi_1),f(\pi_2))$, then $\Omega^1\cong A/(f^\prime(\pi))$ with generator $d\pi=\pi_1-\pi_2$ mod $I^2$.

Note $A/\pi\cong R/(f(0))$. Since $f(x)$ is an Eisenstein polynomial, $f(0)=pu$ where $u\in R$ is a unit. Hence, the induced map of residue fields $R/p\to A/\pi$ is an isomorphism. We denote by $k$ this residue field. Our goal in this section is to prove the following theorem, for which we need two lemmas:

\begin{theorem}\label{MainLemma}
With $A/R$ a totally ramified extension of complete DVRs in mixed characteristic (as above), the composite $$I\otimes_{A\otimes_RA}(k\otimes_Rk)\to\Omega^1\otimes_{A\otimes_RA}(k\otimes_Rk)\to\Omega^1\otimes_Ak$$ is a weak equivalence of chain complexes (or spectra).
\end{theorem}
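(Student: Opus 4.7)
My plan is to compute both endpoints of the composite explicitly, show they are each equivalent to $k \oplus \Sigma k$ as $k$-module spectra, and verify that the composite induces isomorphisms on $\pi_0$ and $\pi_1$ (above which both sides vanish). The key identification is that, using the fiber sequence $I \to A \otimes_R A \to A$ of $A \otimes_R A$-modules, the left hand side $I \otimes_{A \otimes_R A}(k \otimes_R k)$ gets identified with the ``ramification'' spectrum $I^k_{A/R} := \mathrm{fib}(k \otimes_R k \to k \otimes_A k)$ from Definition \ref{DefTotalRam}. Throughout I exploit the Eisenstein presentation $A = R[\pi]/(f(\pi))$, the identity $p = \pi^e \cdot (\mathrm{unit})$ in $A$, and $A/\pi = R/p = k$; the case $e = 1$ is vacuous (then $A = R$ and $\Omega^1 = 0$), so I assume $e \geq 2$.

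For the left hand side, the fiber sequence identification reduces the computation to understanding the comparison map $k \otimes_R k \to k \otimes_A k$. Using the standard DVR resolutions $[R \xrightarrow{p} R]$ and $[A \xrightarrow{\pi} A]$ of $k$, both $k \otimes_R k$ and $k \otimes_A k$ are equivalent to $k \oplus \Sigma k$. The comparison map, induced by the chain map between these resolutions, is the identity on $\pi_0$; on $\pi_1$, however, it acts as multiplication by $p/\pi = \pi^{e-1}\cdot(\mathrm{unit})$, which vanishes mod $\pi$ precisely because $e \geq 2$. Hence the fiber has $\pi_0 = \pi_1 = k$ and is zero elsewhere, yielding LHS $\simeq k \oplus \Sigma k$.

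For the right hand side, since $d := v_\pi(f'(\pi)) \geq e - 1 \geq 1$, we have $\Omega^1 \cong A/(f'(\pi)) \cong A/\pi^d$, and the resolution $A \xrightarrow{\pi^d} A$ tensored with $k = A/\pi$ becomes $k \xrightarrow{0} k$. So $\Omega^1 \otimes_A k \simeq k \oplus \Sigma k$.

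The main obstacle is to verify that the induced composite $k \oplus \Sigma k \to k \oplus \Sigma k$ is actually an equivalence rather than, say, zero on some summand. I would do this by generator tracking: the $\pi_0$-generator of the LHS arises via the connecting homomorphism from the $\mathrm{Tor}_1^A(k,k)$-class $[1 \otimes \pi] \in \pi_1(k \otimes_A k)$, which at the $I$-level is represented by $u = \pi_1 - \pi_2$. Under the first map it becomes $d\pi = u \bmod I^2 \in \Omega^1$, and under the second map it goes to the generator $d\pi \bmod \pi \in \Omega^1/\pi \Omega^1 = k$. The $\pi_1$-generator, coming from $\mathrm{Tor}_1^R(k,k) = k$, traces through similarly to the generator of $\mathrm{Tor}_1^A(\Omega^1, k) = \ker(\pi^d : k \to k) = k$. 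This careful bookkeeping — aligning the two independent constructions of $k \oplus \Sigma k$ through the composite — is the technical heart of the argument.
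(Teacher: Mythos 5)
Your computation of the two endpoints is correct and closely parallels the paper's Lemma~\ref{L2}. You compute $I^k_{A/R}$ from the long exact sequence associated with the fiber sequence $I^k_{A/R}\to k\otimes_Rk\to k\otimes_Ak$ and the observation that the induced map on $\mathrm{Tor}_1$ is multiplication by $\pi^{e-1}\cdot(\text{unit})\equiv 0$; the paper instead identifies $I\otimes_Ak\cong \pi k[\pi]/(\pi^e)$ directly from $A/p\cong k[\pi]/(\pi^e)$, but both routes give $k\oplus\Sigma k$. Your $\pi_0$ argument ($d\pi\mapsto d\pi$, hence nonzero) is likewise fine, and it is the same observation as in the paper's proof ($H_0(\phi)$ is surjective and hence an isomorphism of one-dimensional $k$-vector spaces).

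However, the $\pi_1$ step is a genuine gap, and I think you underestimate it. You write that the $\pi_1$-generator ``traces through similarly,'' but the $\pi_0$ case was easy precisely because there is an obvious class in $I$, namely $d\pi=\pi_1-\pi_2$, whose image in $\Omega^1$ is visibly nonzero. There is no analogous obvious class for $H_1$. The paper devotes nearly all of the proof of Theorem~\ref{MainLemma} to this one step: it models $k\otimes_A I\otimes_A k$ by the explicit two-stage Koszul complex $I\xrightarrow{\alpha}I\oplus I\xrightarrow{\beta}I$, exhibits a cycle $(\omega_1,\omega_2)\in I\oplus I$ built from the Eisenstein coefficients $g(x)$ and unit $u$ (so that $\pi_1\omega_1-\pi_2\omega_2=0$), and then verifies $\omega_1-\omega_2\neq 0$ in $\Omega^1$ by the nontrivial check $\pi(\omega_1-\omega_2)=f'(\pi)\,d\pi$ together with the fact that $\mathrm{Ann}(d\pi)=(f'(\pi))$. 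This is a delicate algebraic computation with no conceptual shortcut in sight, and ``the generator of $\mathrm{Tor}_1^R(k,k)$ traces through to the generator of $\mathrm{Tor}_1^A(\Omega^1,k)$'' is exactly the assertion to be proved, not an argument for it. As written, your proposal is an outline whose hardest step is deferred; to complete it you would need to produce an explicit representative of the $\mathrm{Tor}_1^R(k,k)$-class at the $I$-level (through both the Lemma~\ref{L1} base-change and the connecting map of the fiber sequence, which are themselves nontrivial identifications to make explicit) and then show its image in $\Omega^1\otimes_Ak$ is nonzero, which in effect reproduces the paper's $(\omega_1,\omega_2)$ calculation.

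One further small caution: your framing via the long exact sequence and connecting homomorphisms is cleaner to state than the paper's chain-level model, but it is harder to compute with, because the class in $\pi_1(I^k_{A/R})$ that you need arises as a lift across a connecting map rather than as an element you can write down directly. The paper's choice to work with the explicit complex $I\to I\oplus I\to I$ is what makes the cycle $(\omega_1,\omega_2)$ accessible.
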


\noindent The language of the theorem differs somewhat from how it was stated in the introduction, but the two are equivalent (Remark \ref{RmkRect} below).

\begin{lemma}\label{L1}
If $S\to R\to A$ are commutative ring spectra and $M$ is an $(R,R)$-bimodule over $S$ (that is, an $R\otimes_S R$-module), then the map of $(R,R)$-bimodules $M\to M\otimes_{R\otimes_S R}(A\otimes_S A)$ induces a map of $(A,A)$-bimodules $$\eta(M):A\otimes_RM\otimes_RA\to M\otimes_{R\otimes_S R}(A\otimes_S A)$$ which is an equivalence.
\end{lemma}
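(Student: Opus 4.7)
The plan is to view both sides of $\eta(M)$ as colimit-preserving functors of $M$ landing in $A$-bimodules (equivalently, $A\otimes_SA$-modules), and to check that the natural transformation $\eta$ is an equivalence on the single generator $M=R\otimes_SR$. A standard compact-generation argument will then give the result.

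First, define $F(M) := A\otimes_RM\otimes_RA$ and $G(M) := M\otimes_{R\otimes_SR}(A\otimes_SA)$ as functors $\mathrm{Mod}_{R\otimes_SR}\to\mathrm{Mod}_{A\otimes_SA}$. Each is built from extension-of-scalars functors: $F$ extends along $R\to A$ in each of the two $R$-structures on $M$ (first along the left, then the right), while $G$ extends along $R\otimes_SR\to A\otimes_SA$. All of these are left adjoints, so $F$ and $G$ preserve all colimits. The map $\eta$ is natural in $M$ by construction.

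Next, the $\infty$-category $\mathrm{Mod}_{R\otimes_SR}$ is generated under colimits by the rank-one free module $R\otimes_SR$, so it suffices to show $\eta(R\otimes_SR)$ is an equivalence. Here
$$F(R\otimes_SR) = A\otimes_R(R\otimes_SR)\otimes_RA \cong (A\otimes_RR)\otimes_S(R\otimes_RA)\cong A\otimes_SA,$$
$$G(R\otimes_SR) = (R\otimes_SR)\otimes_{R\otimes_SR}(A\otimes_SA)\cong A\otimes_SA,$$
and by unwinding the definition of $\eta$ one verifies that under these identifications the map becomes the identity on $A\otimes_SA$: on $F$, the element $a\otimes(r\otimes r')\otimes a'$ is identified with $ar\otimes r'a'\in A\otimes_SA$, while on $G$ the unit map $R\otimes_SR\to A\otimes_SA$ extended over $A\otimes_SA$ gives the same $ar\otimes r'a'$.

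The principal obstacle is purely bookkeeping around the many module structures: one must fix a coherent interpretation of $A\otimes_RM\otimes_RA$ as an $(A,A)$-bimodule (say, via the two-sided bar construction, or as $(A\otimes_SA)\otimes_{R\otimes_SR}M$ after rewriting), and then verify that the map of $(R,R)$-bimodules $M\to M\otimes_{R\otimes_SR}(A\otimes_SA)$ adjoints up to the $A$-bimodule map $\eta(M)$. Once this coherent $\infty$-categorical setup is in place, the colimit-generation argument is essentially formal, and the case $M=R\otimes_SR$ is a direct associativity computation. No hard computation arises.
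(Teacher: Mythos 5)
Your proposal is correct and follows exactly the paper's own argument: both sides are colimit-preserving functors $\mathrm{Mod}_{R\otimes_SR}\to\mathrm{Mod}_{A\otimes_SA}$, and $\eta$ is checked to be an equivalence on the generator $R\otimes_SR$. The extra detail you supply on the identification $F(R\otimes_SR)\cong G(R\otimes_SR)\cong A\otimes_SA$ is a welcome elaboration of what the paper leaves implicit.
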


\begin{proof}
This $\eta$ is a natural transformation of functors $\text{Mod}_{R\otimes_S R}\to\text{Mod}_{A\otimes_S A}$, each of which preserves colimits, and $\eta(R\otimes_S R)$ is an isomorphism. Therefore, $\eta(M)$ is a natural isomorphism for any $M$.
\end{proof}

\begin{remark}\label{RmkBlah}
$I$ is the homotopy fiber of $A\otimes_RA\to A$, so $I\otimes_{A\otimes_RA}(k\otimes_Rk)$ is the homotopy fiber of $k\otimes_Rk\to A\otimes_{A\otimes_RA}(k\otimes_Rk)$. Applying Lemma \ref{L1}, $I\otimes_{A\otimes_RA}(k\otimes_Rk)\cong k\otimes_AI\otimes_Ak$ is the homotopy fiber of $k\otimes_Rk\to k\otimes_Ak$.

In other words, Theorem \ref{MainLemma} asserts that there is a fiber sequence $$\Omega^1\otimes_Ak\to k\otimes_Rk\to k\otimes_Ak.$$ (We prefer the notation of Theorem \ref{MainLemma}, since the map $\Omega^1\otimes_Ak\to k\otimes_Rk$ is not easily described.)
\end{remark}

\begin{remark}\label{RmkRect}
If $A/R$ is an extension of rings of integers in number fields, totally ramified at $\mathfrak{p}$ with residue field $k$, then the completion $A_p/R_p$ is a totally ramified extension of complete DVRs in mixed characteristic. By Theorem \ref{MainLemma}, there is a fiber sequence $\Omega^1_{A_p/R_p}\otimes_{A_p}k\to k\otimes_{R_p}k\to k\otimes_{A_p}k$. Since $\Omega^1_{A_p/R_p}\cong(\Omega^1_{A/R})^\wedge_p$ and $k$ is already $p$-complete, it follows that there is a fiber sequence $\Omega^1_{A/R}\otimes_Ak\to k\otimes_Rk\to k\otimes_Ak$, which is how the theorem is stated in the introduction.
\end{remark}

\begin{lemma}\label{L2}
$H_\ast(\Omega^1\otimes_Ak)\cong H_\ast(k\otimes_AI\otimes_Ak)\cong\begin{cases}k\text{ if }\ast=0\text{ or }1 \\ 0\text{ otherwise.}\end{cases}$
\end{lemma}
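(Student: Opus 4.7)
The plan is to compute $H_\ast(\Omega^1 \otimes_A k)$ and $H_\ast(k \otimes_A I \otimes_A k)$ independently and check that each is $k$ in degrees $0$ and $1$ and vanishes otherwise. (The fact that the two are \emph{isomorphic} as chain complexes is the content of Theorem \ref{MainLemma} itself, so I would not try to produce that isomorphism here.) Throughout I would assume the ramification index $e \geq 2$; the case $e = 1$ means $A = R$, in which case $\Omega^1 = 0$ and the statement is vacuous.

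For the first group, I would use the identification $\Omega^1 \cong A/(f'(\pi))$ given just before the lemma. Because $f$ is Eisenstein of degree $e \geq 2$, every coefficient $a_i$ lies in $(p) = (\pi^e)$, so every term of $f'(\pi) = e\pi^{e-1} + (e-1)a_{e-1}\pi^{e-2} + \cdots + a_1$ is divisible by $\pi$. Writing $d = v_A(f'(\pi)) \geq 1$, this gives $\Omega^1 \cong A/\pi^d$; the length-one resolution $[A \xrightarrow{\pi^d} A]$ then yields
$$\Omega^1 \otimes_A^L k \simeq \bigl[k \xrightarrow{\pi^d} k\bigr] = \bigl[k \xrightarrow{0} k\bigr],$$
which has $H_0 = H_1 = k$ and vanishes in all other degrees.

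For the second group, I would invoke Remark \ref{RmkBlah} to identify $k \otimes_A I \otimes_A k$ with the homotopy fiber $F$ of the natural map $k \otimes_R k \to k \otimes_A k$. The Koszul-type resolutions $[R \xrightarrow{p} R]$ and $[A \xrightarrow{\pi} A]$ of $k$ show that both $k \otimes_R^L k$ and $k \otimes_A^L k$ are quasi-isomorphic to $[k \xrightarrow{0} k]$, each with $H_0 = H_1 = k$. The substantive step, which I expect to be the main obstacle, is identifying the induced map between them. I would lift the identity $k \to k$ to a chain map of the two resolutions: in degree $0$ the inclusion $R \hookrightarrow A$ suffices, and in degree $1$ we need $\alpha \in A$ with $\pi \alpha = p$; since $p = u\pi^e$ for a unit $u$, the only choice is $\alpha = u\pi^{e-1}$. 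After tensoring with $k$, the induced map on $H_0 = k$ is the identity, while the induced map on $H_1 = k$ is multiplication by $u\pi^{e-1}$, which vanishes modulo $\pi$ exactly because $e \geq 2$. The long exact sequence of the fiber sequence $F \to k \otimes_R^L k \to k \otimes_A^L k$ then yields $H_0(F) = H_1(F) = k$ and zero elsewhere, completing the calculation. (As a sanity check: when $e = 1$ the same computation would make the $H_1$ map an isomorphism and kill $F$, consistent with $\Omega^1$ vanishing in that degenerate case.)
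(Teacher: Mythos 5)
Your proof is correct, and its second half takes a genuinely different route from the paper's.

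For $\Omega^1\otimes_Ak$ the two arguments are essentially the same (both hinge on $\Omega^1\cong A/(\pi^d)$ with $d\geq 1$; you resolve $\Omega^1$ where the paper resolves $k$, but either way one lands on $[k\xrightarrow{0}k]$). For $k\otimes_AI\otimes_Ak$, the paper works directly with $I\otimes_Ak$: it identifies this as the kernel of $A/p\cong k[\pi]/(\pi^e)\to A/\pi\cong k$, namely $\pi k[\pi]/(\pi^e)$, and then tensors down with a Koszul resolution of $k$ to read off the homology (with explicit generators $\pi$ in degree $0$ and $\pi^{e-1}$ in degree $1$). You instead stay at the level of the fiber sequence $F\to k\otimes_R^Lk\to k\otimes_A^Lk$, compute both targets as $[k\xrightarrow{0}k]$ via Koszul resolutions over $R$ and $A$, and then determine the induced map on homology by lifting the identity $k\to k$ to a chain map of resolutions (forcing $\alpha=u\pi^{e-1}$ in degree $1$, hence $0$ mod $\pi$ when $e\geq 2$) before running the long exact sequence. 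Both are valid; the paper's version is more concrete and hands you explicit cycle representatives (which, as it happens, are not needed in the proof of Theorem \ref{MainLemma}, where a separate class $(\omega_1,\omega_2)$ is constructed), while your version is a clean homological argument that makes the dependence on $e\geq 2$ completely transparent. Your explicit disposal of the degenerate case $e=1$ (i.e., $A=R$, where $\Omega^1=0$ and the stated answer would be wrong) is a small point the paper leaves implicit, and is worth retaining.
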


\begin{proof}
Since $A$ is a DVR, $(f^\prime(\pi))=(\pi^d)$ as ideals for some $d$ (the \emph{differential exponent}). Therefore $\Omega^1=A/(\pi^d)\langle d\pi\rangle$. Resolving $k$ as $A\xrightarrow{\pi}A$, we can resolve $\Omega^1\otimes_Ak$ as the chain complex $A/(\pi^d)\langle d\pi\rangle\xrightarrow{\pi}A/(\pi^d)\langle d\pi\rangle$, which has $H_0\cong k$ generated by $d\pi$ and $H_1\cong k$ generated by $\pi^{d-1}d\pi$.

On the other hand, since $I$ is the homotopy fiber of $A\otimes_RA\to A$, then $I\otimes_Ak$ is the homotopy fiber of $A\otimes_Rk\to k$, or equivalently (since $k\cong R/p$) of $A/p\to A/\pi$. Since $f$ is an Eisenstein polynomial, $f(x)\equiv x^e$ mod $p$; therefore, $A/p\cong k[\pi]/(\pi^e)$, and the projection to $A/\pi\cong k$ sends $\pi\mapsto 0$.

Hence, $I\otimes_Ak$ is equivalent to the fiber of $k[\pi]/(\pi^e)\to k$, $\pi\mapsto 0$. This is surjective, so the fiber is the kernel, $I\otimes_Ak\cong\pi k[\pi]/(\pi^e)$. Therefore, $k\otimes_A(I\otimes_Ak)$ is equivalent to the chain complex $\pi k[\pi]/(\pi^e)\xrightarrow{\pi}\pi k[\pi]/(\pi^e)$, which has homology $k$ in degree $0$ generated by $\pi$ and $k$ in degree $1$ generated by $\pi^{e-1}$.
\end{proof}

\begin{proof}[Proof of Theorem \ref{MainLemma}]
We will prove $\phi:k\otimes_AI\otimes_Ak\to\Omega^1\otimes_Ak$ is a weak equivalence. Recall that $I$ is an ideal in $A\otimes_RA=R[\pi_1,\pi_2]/(f(\pi_1),f(\pi_2))$.

Resolving $k$ as the chain complex $A\xrightarrow{\pi}A$, $k\otimes_AI\otimes_Ak$ admits a weak equivalence from $$I\xrightarrow{\alpha}I\oplus I\xrightarrow{\beta}I$$ given by $\alpha(i)=(\pi_2i,\pi_1i)$ and $\beta(i_1,i_2)=\pi_1i_1-\pi_2i_2$. Unpacking, the map to $\Omega^1\otimes_Ak$ is given by $$\xymatrix{
I\ar[rr]^{(\pi_2i,\pi_1i)}\ar[d] &&I\oplus I\ar[d]_{i_1-i_2}\ar[rr]^{\pi_1i_1-\pi_2i_2} &&I\ar[d]^i &I\otimes_{A\otimes_RA}(k\otimes_Rk)\ar[d]^{\phi}\\
0\ar[rr] &&\Omega^1\ar[rr]_\pi &&\Omega^1 &\Omega^1\otimes_Ak.
}$$ We want to show that $\phi$ is an isomorphism in homology. By Lemma \ref{L2}, $H_2(\phi):0\to 0$, so it is an isomorphism. On the other hand, $H_0(\phi)$ is surjective. Since it is a map of $k$-modules $k\to k$ (Lemma \ref{L2}), it is an isomorphism.

It remains to prove that $H_1(\phi)$ is an isomorphism, which is also a map of $k$-modules $k\to k$ (Lemma \ref{L2}). It will suffice to construct an element $(\omega_1,\omega_2)\in I\oplus I$ such that $\pi_1\omega_1-\pi_2\omega_2=0$ (so it describes a class in $H_1$) and $\omega_1-\omega_2\neq 0$ (so $H_1(\phi)$ is not identically $0$). To do this, fix notation $$f(x)=x^e-pxg(x)-up,$$ where $u$ is a unit in $R$. (This is the general form of an Eisenstein polynomial.) For any polynomial $h(x)$, write $dh(\pi)=h(\pi_1)-h(\pi_2)\in I$. Set $$(\omega_1,\omega_2)=(d\pi^{e-1},u^{-1}g(\pi_1)(\pi_2^{e-1}-pg(\pi_2))d\pi+p\,dg(\pi)-\pi_2^{e-2}d\pi)\in I\oplus I.$$ That is, $\omega_1=\pi_1^{e-1}-\pi_2^{e-2}$ and (expanding and using $\pi_2^e-p\pi_2g(\pi_2)=up$), $$\omega_2=u^{-1}\pi_1\pi_2^{e-1}g(\pi_1)-u^{-1}p\pi_1g(\pi_1)g(\pi_2)-pg(\pi_2)-\pi_1\pi_2^{e-2}+\pi_2^{e-1}.$$ If we expand $\pi_1\omega_1-\pi_2\omega_2$ and use $\pi_1^e+p\pi_2g(\pi_2)-\pi_2^e=p\pi_1g(\pi_1)$, we see $$\pi_1\omega_1-\pi_2\omega_2=-u^{-1}\pi_1g(\pi_1)(\pi_2^e-p\pi_2g(\pi_2)-up)=0,$$ so $(\omega_1,\omega_2)$ is a class in $H_1$.

On the other hand, in $\Omega^1$ we can use identities $dh(\pi)=h^\prime(\pi)d\pi$ and $\pi_1=\pi_2=\pi$ to calculate $$\omega_1-\omega_2=(e\pi^{e-2}-u^{-1}g(\pi)(\pi^{e-1}-pg(\pi))-pg^\prime(\pi))d\pi.$$ All that remains is to show that this class is nonzero in $\Omega^1$. Using the relation $\pi^e-p\pi g(\pi)=up$, we have (in $I$) $$\pi(\omega_1-\omega_2)=(e\pi^{e-1}-pg(\pi)-p\pi g^\prime(\pi))d\pi=f^\prime(\pi)d\pi.$$ Since the annihilator of $d\pi\in\Omega^1$ is $(f^\prime(\pi))\subseteq A$, then $\pi(\omega_1-\omega_2)=f^\prime(\pi)d\pi$ implies $\omega_1-\omega_2\neq 0$ in $\Omega^1$, completing the proof.
\end{proof}

\section{Topological Hochschild homology}
\noindent If $S\to R$ are commutative ring spectra and $M$ is an $R\otimes_SR$-module,  $$\text{THH}^S(R;M)=R\otimes_{R\otimes_SR}M.$$ We always regard $R$ as an $R\otimes_SR$-algebra by multiplication $R\otimes_SR\to R$.

If $S=\mathbb{S}$, we drop it from the notation, writing $\text{THH}(R;M)$. We also write $\text{THH}^S(R)$ for $\text{THH}^S(R;R)$.

\begin{proposition}[Inner base change]\label{PropInnerBase}
If $S\to R\to A$ are commutative ring spectra and $M$ is an $R\otimes_SR$-module, then $$\text{THH}^S(R;M)\otimes_RA\cong\text{THH}^S(A;A\otimes_RM\otimes_RA).$$
\end{proposition}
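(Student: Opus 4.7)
The plan is to reduce both sides to the common expression $M\otimes_{R\otimes_S R}A$, treating $A$ as an $R\otimes_S R$-module via the composite $R\otimes_S R\to R\to A$. The workhorse is Lemma \ref{L1}, which rewrites the ``outer'' bimodule $A\otimes_R M\otimes_R A$ as $M\otimes_{R\otimes_S R}(A\otimes_S A)$.

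For the left-hand side, I unpack $\text{THH}^S(R;M)\otimes_R A=(R\otimes_{R\otimes_S R}M)\otimes_R A$, and collapse the chain of base changes. Explicitly, $R\otimes_{R\otimes_S R}M$ is the base change of $M$ along the multiplication $R\otimes_S R\to R$, so tensoring with $A$ over $R$ yields
\[
(R\otimes_{R\otimes_S R}M)\otimes_R A \;\cong\; M\otimes_{R\otimes_S R}(R\otimes_R A) \;\cong\; M\otimes_{R\otimes_S R}A.
\]

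For the right-hand side, I apply Lemma \ref{L1} to replace the coefficient module:
\[
\text{THH}^S(A;A\otimes_R M\otimes_R A) \;=\; A\otimes_{A\otimes_S A}(A\otimes_R M\otimes_R A) \;\cong\; A\otimes_{A\otimes_S A}\bigl(M\otimes_{R\otimes_S R}(A\otimes_S A)\bigr).
\]
Then associativity of the relative tensor product collapses the $(A\otimes_S A)$ factor against $A$, giving
\[
A\otimes_{A\otimes_S A}\bigl((A\otimes_S A)\otimes_{R\otimes_S R}M\bigr) \;\cong\; A\otimes_{R\otimes_S R}M,
\]
which agrees with the left-hand side.

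The only substantive point is Lemma \ref{L1}; once that is invoked, the argument is pure bookkeeping with base change. The main thing to be careful about is that the $R\otimes_S R$-module structure on $A$ used in both reductions is the same one, namely restriction along $R\otimes_S R\to R\to A$ (equivalently, along $R\otimes_S R\to A\otimes_S A\to A$), so that the two expressions $M\otimes_{R\otimes_S R}A$ obtained from the two sides really coincide as $A$-modules. This compatibility is automatic from the factorization of the multiplication $A\otimes_S A\to A$ through $A\otimes_R A$, so no additional work is needed.
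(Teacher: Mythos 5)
Your proposal is correct and matches the paper's proof in essence: both reduce everything to $M\otimes_{R\otimes_S R}A$ via Lemma \ref{L1} and associativity of the relative tensor product, the only cosmetic difference being that the paper rewrites the left-hand side as $M\otimes_{R\otimes_S R}(A\otimes_S A)\otimes_{A\otimes_S A}A$ and then applies the lemma, whereas you meet in the middle from both sides. Your closing remark about the compatibility of the two $R\otimes_S R$-module structures on $A$ is a point the paper leaves implicit, and it is handled correctly.
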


\begin{proof}
$\text{THH}^S(R;M)\otimes_RA=M\otimes_{R\otimes_SR}A\cong M\otimes_{R\otimes_SR}(A\otimes_SA)\otimes_{A\otimes_SA}A$, which is equivalent to $(A\otimes_RM\otimes_RA)\otimes_{A\otimes_SA}A=\text{THH}(A;A\otimes_RM\otimes_RA)$ by Lemma \ref{L1}.
\end{proof}

\noindent Therefore, multiplication $A\otimes_RA\to A$ induces a map $$\xymatrix{
\text{THH}^S(R)\otimes_RA\ar@{=}[d]\ar[r] &\text{THH}^S(A)\ar@{=}[d] \\
\text{THH}^S(A;A\otimes_RA)\ar[r] &\text{THH}^S(A;A).
}$$ Unpacking, this top map agrees with the evident maps $\text{THH}^S(R)\to\text{THH}^S(A)$ and $A\to\text{THH}^S(A)$.

\begin{definition}
If $S\to R\to A$ are commutative ring spectra, $\text{Ram}^S(A/R)$ is the homotopy fiber of $\text{THH}^S(R)\otimes_RA\to\text{THH}^S(A)$; equivalently, $$\text{Ram}^S(A/R)\cong\text{THH}^S(A;I_{A/R}).$$
\end{definition}

\noindent Here we write $I_{A/R}$ for the fiber of multiplication $A\otimes_RA\to A$. More generally, if $k$ is an $A$-algebra, we write $I_{A/R}^k$ for the fiber of $k\otimes_Rk\to k\otimes_Ak$, so that $I_{A/R}=I_{A/R}^A$. As in Remark \ref{RmkBlah}, $I_{A/R}^k\cong k\otimes_AI_{A/R}\otimes_Ak$. Hence:

\begin{remark}\label{RmkBlah2}
By inner base change, $\text{Ram}^S(A/R)\otimes_Ak\cong\text{THH}^S(k;I_{A/R}^k)$.
\end{remark}

\begin{example}[\'etale descent \cite{Akhil}]\label{ExED}
If $R\to A$ is \'etale, then $\text{Ram}^S(A/R)\cong 0$.
\end{example}

\begin{example}
Suppose that $R,A$ are discrete and $A$ is flat over $R$. Then $I_{A/R}$ is also discrete; it is simply the kernel of multiplication $A\otimes_RA\to A$. Since $A\cong(A\otimes_RA)/I$, we have $\text{Ram}_0^R(A/R)=A\otimes^0_{A\otimes_RA}I\cong I/I^2$, or $$\Omega^1_{A/R}=\text{Ram}_0^R(A/R).$$ Even if $A/R$ is not necessarily flat, then $\pi_0I_{A/R}$ is the kernel of $A\otimes_RA\to A$, and it is still true that $\Omega^1_{A/R}\cong\text{Ram}_0^R(A/R)$.
\end{example}

\section{Formulas for $I_{A/R}^k$ and $\text{Ram}^S(A/R)$}
\begin{proposition}\label{PropComplete}
If $R\to A\to k$ are commutative ring spectra and $p$ is a prime, then $(I_{A/R}^k)^\wedge_p\cong I_{A^\wedge_p/R^\wedge_p}^{k^\wedge_p}$ and $\text{Ram}^S(A/R)^\wedge_p\cong\text{Ram}^{S^\wedge_p}(A^\wedge_p/R^\wedge_p)$. If $R$ and $A$ are discrete, then $(\Omega^1_{A/R})^\wedge_p\cong\Omega^1_{A^\wedge_p/R^\wedge_p}$.
\end{proposition}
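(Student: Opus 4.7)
My plan rests on two standard facts about $p$-completion of spectra. First, as the Bousfield localization associated to $\mathbb{S}/p$, it preserves cofiber sequences, hence (in spectra) fiber sequences. Second, it is compatible with relative tensor products in the sense that $(X\otimes_R Y)^\wedge_p \cong X^\wedge_p \otimes_{R^\wedge_p} Y^\wedge_p$ whenever the objects involved are bounded below, which holds throughout the paper's applications. The first step is therefore to prove this compatibility lemma in two substeps: since the fiber of $Y \to Y^\wedge_p$ is $p$-locally trivial, tensoring with $X$ preserves $p$-locality of that fiber, giving $(X\otimes_R Y)^\wedge_p \cong (X^\wedge_p \otimes_R Y^\wedge_p)^\wedge_p$; then one identifies the right side with $X^\wedge_p \otimes_{R^\wedge_p} Y^\wedge_p$ using that both factors are already $R^\wedge_p$-modules and that (in the bounded-below setting) this base-changed tensor is itself $p$-complete.

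Granting the lemma, the first claim follows by applying $(-)^\wedge_p$ to the defining fiber sequence $I_{A/R}^k \to k\otimes_R k \to k\otimes_A k$: preservation of fibers identifies $(I_{A/R}^k)^\wedge_p$ with the fiber of $k^\wedge_p \otimes_{R^\wedge_p} k^\wedge_p \to k^\wedge_p \otimes_{A^\wedge_p} k^\wedge_p$, which is precisely $I_{A^\wedge_p/R^\wedge_p}^{k^\wedge_p}$. For the second claim, I would use the formula $\text{Ram}^S(A/R) \cong A\otimes_{A\otimes_S A} I_{A/R}$ and apply the compatibility lemma to the outer tensor together with the inner tensors defining $I_{A/R}$; combining this with fiber-preservation yields $\text{Ram}^{S^\wedge_p}(A^\wedge_p/R^\wedge_p)$ after $p$-completion.

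For the third claim, when $R$ and $A$ are discrete I would invoke the identification $\Omega^1_{A/R} \cong \text{Ram}_0^R(A/R)$ recorded in the preceding example. Taking $\pi_0$ of the second claim (with $S=R$) then gives $(\Omega^1_{A/R})^\wedge_p \cong \Omega^1_{A^\wedge_p/R^\wedge_p}$, using that $p$-completion of a discrete abelian group coincides with classical $p$-adic completion and that $A^\wedge_p$, $R^\wedge_p$ remain discrete in this setting. The main obstacle throughout will be the clean statement and proof of the tensor-product compatibility lemma---specifically, verifying that $X^\wedge_p \otimes_{R^\wedge_p} Y^\wedge_p$ is itself $p$-complete so that no trailing completion is needed. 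Without connectivity hypotheses this fails, and the proposition tacitly requires such hypotheses, which hold in the paper's applications to rings of integers and their residue fields.
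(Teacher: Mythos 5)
Your proposal is correct and takes essentially the same route as the paper: apply $p$-completion to the defining fiber sequence for $I_{A/R}^k$, use compatibility of $p$-completion with (relative) tensor products to pass to the $p$-complete rings, obtain the claim for $\text{Ram}^S(A/R)\cong A\otimes_{A\otimes_S A}I_{A/R}$ the same way, and deduce the $\Omega^1$ statement from $\Omega^1_{A/R}\cong\pi_0\text{Ram}^R(A/R)$. The only difference is that you flag the bounded-below hypothesis needed for $p$-completion to commute with relative tensor products, which the paper silently assumes; this is a genuine point of care but not a different argument.
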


\begin{proof}
Since $p$-completion preserves tensor products and fiber sequences, $$(I_{A/R}^k)^\wedge_p\to k^\wedge_p\otimes_{R^\wedge_p}k^\wedge_p\to k^\wedge_p\otimes_{A^\wedge_p}k^\wedge_p$$ is a fiber sequence, which is to say $(I_{A/R}^k)^\wedge_p\cong I_{A^\wedge_p/R^\wedge_p}^{k^\wedge_p}$. Therefore, $$\text{Ram}^S(A/R)^\wedge_p\cong A^\wedge_p\otimes_{A^\wedge_p\otimes_{S^\wedge_p}A^\wedge_p}(I_{A/R})^\wedge_p\cong\text{Ram}^{S^\wedge_p}(A^\wedge_p/R^\wedge_p).$$ The last claim follows by $\Omega^1_{A/R}=\pi_0\text{Ram}^R(A/R)$.
\end{proof}

\begin{proposition}\label{PropTriple}
If $A\to B\to C\to k$ are commutative ring spectra, then the following are fiber sequences: $$I_{B/A}^k\to I_{C/A}^k\to I_{C/B}^k,$$ $$\text{Ram}^S(B/A)\otimes_BC\to\text{Ram}^S(C/A)\to\text{Ram}^S(C/B).$$
\end{proposition}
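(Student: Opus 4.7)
The plan is to prove the first fiber sequence as a direct instance of the ``fiber of a composition'' lemma in a stable $\infty$-category, and then deduce the second from the first by applying $\text{THH}^S(C;-)$ after identifying all three $\text{Ram}$ terms with THH of a suitable coefficient module on $C\otimes_S C$.

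First I would observe that the composition of multiplication maps $k\otimes_A k \to k\otimes_B k \to k\otimes_C k$ has, by definition of the invariants $I^k$, fibers $I^k_{B/A}$ and $I^k_{C/B}$ on the two individual arrows, and fiber $I^k_{C/A}$ on the composite. The standard fact that for a composable pair $X\to Y\to Z$ in a stable $\infty$-category there is a canonical fiber sequence $\text{fib}(X\to Y)\to\text{fib}(X\to Z)\to\text{fib}(Y\to Z)$ (an instance of the octahedral axiom) then gives the first claim.

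For the second fiber sequence, I would rewrite each term as $\text{THH}^S(C;M)$ for an appropriate $C\otimes_S C$-module $M$. The definitions give $\text{Ram}^S(C/A)\cong\text{THH}^S(C;I^C_{C/A})$ and $\text{Ram}^S(C/B)\cong\text{THH}^S(C;I^C_{C/B})$ (using $I_{C/A}=I^C_{C/A}$ and $I_{C/B}=I^C_{C/B}$), while for the leftmost term inner base change (Proposition \ref{PropInnerBase}) yields
$$\text{Ram}^S(B/A)\otimes_B C \;\cong\; \text{THH}^S(B;I_{B/A})\otimes_B C \;\cong\; \text{THH}^S(C;\,C\otimes_B I_{B/A}\otimes_B C).$$
The coefficient here is $I^C_{B/A}$, as one sees by tensoring the defining fiber sequence $I_{B/A}\to B\otimes_A B\to B$ of $B\otimes_A B$-modules along $B\otimes_A B\to C\otimes_A C$ and applying Lemma \ref{L1} to identify the two endpoints of the resulting fiber sequence as $C\otimes_A C$ and $C\otimes_B C$, respectively.

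The proof then concludes by applying $\text{THH}^S(C;-) = C\otimes_{C\otimes_S C}(-)$ to the first fiber sequence with $k=C$; this preserves fiber sequences because tensor product over a ring spectrum is exact. The only real obstacle is the bookkeeping identification $C\otimes_B I_{B/A}\otimes_B C \cong I^C_{B/A}$, but once Lemma \ref{L1} is in hand this is a mechanical check, so the heart of the proposition really is just the fiber-of-a-composition observation in the first paragraph above.
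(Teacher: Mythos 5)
Your proposal is correct and follows essentially the same route as the paper: the first sequence is the fiber-of-a-composite (pasted pullback squares) fact applied to $k\otimes_Ak\to k\otimes_Bk\to k\otimes_Ck$, and the second is obtained by setting $k=C$ and applying $\text{THH}^S(C;-)$. Your identification $\text{Ram}^S(B/A)\otimes_BC\simeq\text{THH}^S(C;I^C_{B/A})$ via inner base change and Lemma \ref{L1} is exactly the content of Remark \ref{RmkBlah2}, so nothing is missing.
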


\begin{proof}
All the squares in the following diagrams are homotopy pullbacks. In fact, the second diagram is obtained from the first by choosing $k=C$ and applying the functor $\text{THH}^S(C;-)$.) $$\xymatrix{
I_{B/A}^k\ar[r]\ar[d] &I_{C/A}^k\ar[r]\ar[d] &k\otimes_Ak\ar[d] \\
0\ar[r] &I_{C/B}^k\ar[r]\ar[d] &k\otimes_Bk\ar[d] \\
&0\ar[r] &k\otimes_Ck,
}$$ $$\xymatrix{
\text{Ram}^S(B/A)\otimes_BC\ar[r]\ar[d] &\text{Ram}^S(C/A)\ar[r]\ar[d] &\text{THH}^S(A)\otimes_AC\ar[d] \\
0\ar[r] &\text{Ram}^S(C/B)\ar[r]\ar[d] &\text{THH}^S(B)\otimes_BC\ar[d] \\
&0\ar[r] &\text{THH}^S(C).
}$$
\end{proof}

\noindent In $\text{THH}^S(R)$, we can vary the argument $R$ using the inner base change formula (Proposition \ref{PropInnerBase}). There is also an outer base change formula used to vary $S$, which we describe in the remainder of this section. We won't use it in any proofs, but we cited it in the introduction (Remark \ref{RmkGeneral}) to justify that $\text{Ram}^S(A/R)$ is a strictly stronger invariant than reduced THH, and $\text{Ram}(A/R)=\text{Ram}^\mathbb{S}(A/R)$ is the strongest of all.

Fix commutative ring spectra $S\to S^\prime\to R$ and an $R\otimes_{S^\prime}R$-module $M$. We may regard $R$ and $M$ as modules over $R\otimes_{S^\prime}R\cong S^\prime\otimes_{S^\prime\otimes_SS^\prime}(R\otimes_SR)$ and $R\otimes_{S^\prime}R\cong(R\otimes_SR)\otimes_{S^\prime\otimes_SS^\prime}S^\prime$, respectively. It follows that

\begin{center}$\text{THH}^S(R;M)=R\otimes_{R\otimes_SR}M$ is a module over $S^\prime\otimes_SS^\prime=\text{THH}^S(S^\prime)$\end{center}

\noindent For the same reason, $\text{THH}^{S^\prime}(R;M)$ also has a $\text{THH}^S(S^\prime)$-module structure, but this one is trivial -- it is induced from the $S^\prime$-module structure by restriction along $\text{THH}^S(S^\prime)\to\text{THH}^{S^\prime}(S^\prime)=S^\prime$. Therefore, the map of $\text{THH}^S(S^\prime)$-modules $\text{THH}^S(R;M)\to\text{THH}^{S^\prime}(R;M)$ induces \begin{equation}\label{Eqn1}
\text{THH}^S(R;M)\otimes_{\text{THH}^S(S^\prime)}S^\prime\to\text{THH}^{S^\prime}(R;M).
\end{equation}

\begin{proposition}[Outer base change]\label{PropOuterBase}
For any $S\to S^\prime\to R$ and $R\otimes_{S^\prime}R$-module $M$, the map (\ref{Eqn1}) is an equivalence.
\end{proposition}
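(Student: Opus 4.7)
The plan is to derive outer base change as a formal consequence of the description of $R \otimes_{S'} R$ as a pushout of commutative ring spectra, together with a general identity for tensor products of modules over such a pushout. The key ring-theoretic observation is that $R \otimes_{S'} R$ sits in a pushout diagram
\[R \otimes_S R \longleftarrow S' \otimes_S S' \xrightarrow{\mu} S',\]
where $\mu$ is multiplication; this simply expresses the fact that passing from $\otimes_S$ to $\otimes_{S'}$ amounts to identifying the two $S'$-actions on the result.

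The technical core is the following lemma: for any pushout $D = A \otimes_B C$ of commutative ring spectra and any $D$-modules $P, Q$, there is a natural equivalence
\[P \otimes_D Q \cong (P \otimes_A Q) \otimes_{C \otimes_B C} C,\]
where $P \otimes_A Q$ carries a natural $C \otimes_B C$-action via the two $C$-actions (one on $P$, one on $Q$) and $C \otimes_B C \to C$ is multiplication. Conceptually, tensoring over $A$ identifies the two $A$-actions on $P \otimes_B Q$ but leaves the two $C$-actions independent; tensoring further with $C$ over $C \otimes_B C$ then identifies the two $C$-actions, which is precisely the data needed to tensor over $D$. I would prove the lemma by observing that both sides are functors $\text{Mod}_D \times \text{Mod}_D \to \text{Mod}_D$ preserving colimits in each variable, so it suffices to check agreement on the generating case $P = Q = D$; direct calculation gives $D \otimes_D D = D$ on the left, while on the right, commutativity and associativity of the tensor product yield $D \otimes_A D \cong A \otimes_B (C \otimes_B C)$, whence $(D \otimes_A D) \otimes_{C \otimes_B C} C \cong A \otimes_B C = D$.

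Applying the lemma with $A = R \otimes_S R$, $B = S' \otimes_S S'$, $C = S'$, $D = R \otimes_{S'} R$, $P = R$, and $Q = M$ yields
\[\text{THH}^{S'}(R; M) = R \otimes_{R \otimes_{S'} R} M \cong (R \otimes_{R \otimes_S R} M) \otimes_{S' \otimes_{S' \otimes_S S'} S'} S' = \text{THH}^S(R; M) \otimes_{\text{THH}^S(S')} S',\]
and a quick inspection of the construction shows this equivalence inverts the canonical map (\ref{Eqn1}). The main obstacle is the module lemma itself; once it is established, the application to outer base change is mechanical. Within the lemma, the delicate point is correctly organizing the $C \otimes_B C$-action on $P \otimes_A Q$ so that the natural comparison map is well-defined, but the reduction to free modules via colimit-preservation then sidesteps any explicit bar-construction computation.
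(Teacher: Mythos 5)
Your proof is correct, and at bottom it rests on the same technique as the paper's: a natural transformation of colimit-preserving functors is checked on a generating free module. The difference is in how that reduction is packaged. The paper treats (\ref{Eqn1}) directly as a natural transformation in the single variable $M$, reduces to $M=R\otimes_{S'}R$, and finishes by quoting inner base change (Proposition \ref{PropInnerBase}) to identify $\text{THH}^S(R;R\otimes_{S'}R)\cong R\otimes_{S'}\text{THH}^S(S')$, whence tensoring with $S'$ over $\text{THH}^S(S')$ gives $R$. You instead isolate a general identity $P\otimes_D Q\cong(P\otimes_A Q)\otimes_{C\otimes_B C}C$ for a pushout $D=A\otimes_B C$ of commutative ring spectra, proved by the same reduction carried out in each variable separately down to $P=Q=D$, and then specialize via the (correct) pushout description $R\otimes_{S'}R\cong(R\otimes_S R)\otimes_{S'\otimes_S S'}S'$ together with the observation that $C\otimes_B C=S'\otimes_{S'\otimes_S S'}S'=\text{THH}^S(S')$. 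Your route is longer but yields a reusable statement and makes transparent why $\text{THH}^S(S')$ measures the difference between $\otimes_S$ and $\otimes_{S'}$; the paper's is shorter because it recycles Lemma \ref{L1} and Proposition \ref{PropInnerBase} rather than establishing a new bifunctor identity. One point to make explicit in a final write-up: your closing claim that the resulting equivalence agrees with the specific map (\ref{Eqn1}) should itself be justified by evaluating both natural transformations on the generator $M=R\otimes_{S'}R$ (natural transformations between colimit-preserving functors out of $\text{Mod}_{R\otimes_{S'}R}$ are determined by their value on the free module of rank one), which is the same argument you already deploy twice.
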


\begin{proof}
Each side in (\ref{Eqn1}) is functorial in $M$, and the map is a natural transformation $\eta_M$ of functors $\text{Mod}_{R\otimes_{S^\prime}R}\to\text{Mod}_R$, each of which preserves colimits. Therefore, it suffices to prove that $$\eta_{R\otimes_{S^\prime}R}:\text{THH}^S(R;R\otimes_{S^\prime}R)\otimes_{\text{THH}^S(S^\prime)}S^\prime\to\text{THH}^{S^\prime}(R;R\otimes_{S^\prime}R)\cong R$$ is an equivalence. Unpacking, this is a consequence of inner base change (Proposition \ref{PropInnerBase}) since $\text{THH}^S(R;R\otimes_{S^\prime}R)\cong R\otimes_{S^\prime}\text{THH}^S(S^\prime)$.
\end{proof}

\begin{corollary}\label{CorOuterBase}
If $S\to S^\prime\to R\to A$ are commutative ring spectra, $$\text{Ram}^S(A/R)\otimes_{\text{THH}^S(S^\prime)}S^\prime\cong\text{Ram}^{S^\prime}(A/R).$$
\end{corollary}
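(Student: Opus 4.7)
The plan is to reduce the statement directly to the outer base change formula (Proposition \ref{PropOuterBase}). By definition $\text{Ram}^S(A/R) \cong \text{THH}^S(A; I_{A/R})$ and $\text{Ram}^{S^\prime}(A/R) \cong \text{THH}^{S^\prime}(A; I_{A/R})$, where $I_{A/R}$ is the fiber of $A\otimes_R A\to A$; note that this module depends only on $R$ and $A$, not on the base. Hence the claim takes the form
$$\text{THH}^S(A; I_{A/R}) \otimes_{\text{THH}^S(S^\prime)} S^\prime \cong \text{THH}^{S^\prime}(A; I_{A/R}),$$
which is precisely the conclusion of Proposition \ref{PropOuterBase} applied to the tower $S\to S^\prime\to A$ (obtained by composing $S^\prime\to R\to A$) with coefficient module $M=I_{A/R}$.

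The only hypothesis to verify before invoking Proposition \ref{PropOuterBase} is that $I_{A/R}$ carries a natural $A\otimes_{S^\prime}A$-module structure. This is automatic: the map $S^\prime\to R$ induces a map of commutative ring spectra $A\otimes_{S^\prime}A\to A\otimes_R A$, and we equip the $A\otimes_R A$-module $I_{A/R}$ with its $A\otimes_{S^\prime}A$-module structure by restriction along this map. One then checks that the $\text{THH}^S(S^\prime)$-action on $\text{Ram}^S(A/R)$ appearing in the statement of the corollary agrees with the one described in the paragraph preceding Proposition \ref{PropOuterBase}; both are induced by the factorization $A\otimes_{S^\prime}A\cong S^\prime\otimes_{S^\prime\otimes_SS^\prime}(A\otimes_SA)$ (with $R$ in the preceding discussion replaced by $A$).

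There is essentially no substantive obstacle here: the corollary is a tautological consequence of outer base change once one recognizes $\text{Ram}$ as a special instance of $\text{THH}$ with coefficients in a module whose construction is insensitive to the choice of base ring spectrum. If any care is needed, it is only in bookkeeping the actions so that the $\text{THH}^S(S^\prime)$-module structures on the two sides match the ones coming out of Proposition \ref{PropOuterBase}, but this is formal.
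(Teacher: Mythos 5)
Your proof is correct and is exactly the intended argument: the corollary is Proposition \ref{PropOuterBase} applied to the tower $S\to S^\prime\to A$ with coefficients $M=I_{A/R}$, restricted along $A\otimes_{S^\prime}A\to A\otimes_RA$ to an $A\otimes_{S^\prime}A$-module, together with the identifications $\text{Ram}^S(A/R)\cong\text{THH}^S(A;I_{A/R})$ and $\text{Ram}^{S^\prime}(A/R)\cong\text{THH}^{S^\prime}(A;I_{A/R})$. The paper states the corollary without a separate proof precisely because it follows this immediately.
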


\section{Ramification for commutative ring spectra}
\noindent Recall Definition \ref{DefTotalRam} -- We say $A/R$ is:
\begin{itemize}
\item \emph{unramified} at $k$ if $I_{A/R}^k\otimes_{k\otimes_Rk}k\cong 0$;
\item \emph{totally ramified} at $k$ if $I_{A/R}^k$ is a $k$-module (with trivial $k\otimes_Rk$-module structure induced by multiplication $k\otimes_Rk\to k$).
\end{itemize}

\begin{proposition}\label{PropEasy}
$I_{A/R}^k\otimes_{k\otimes_Rk}k\cong\text{Ram}^R(A/R)\otimes_Ak$. Thus, $A/R$ is unramified at $k$ if and only if $\text{Ram}^R(A/R)\otimes_Ak\cong 0$.
\end{proposition}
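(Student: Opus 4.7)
The plan is to prove the first isomorphism by unwinding definitions and invoking inner base change (Proposition~\ref{PropInnerBase}); the second statement is then immediate from the definition of ``unramified at $k$''.

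First I would unpack $\text{Ram}^R(A/R)$ using the equivalent description given just after the definition, namely $\text{Ram}^R(A/R)\cong\text{THH}^R(A;I_{A/R})=A\otimes_{A\otimes_RA}I_{A/R}$. Tensoring with $k$ over $A$, the goal is to rewrite
$$\text{Ram}^R(A/R)\otimes_Ak\cong\text{THH}^R(A;I_{A/R})\otimes_Ak$$
in a form where the coefficient module becomes $I_{A/R}^k$. This is exactly the situation handled by inner base change (Proposition~\ref{PropInnerBase}): applied with the outer ring $S=R$, the inner ring $A$ in the role of the proposition's $R$, and $k$ in the role of the proposition's $A$, it gives
$$\text{THH}^R(A;I_{A/R})\otimes_Ak\cong\text{THH}^R(k;k\otimes_AI_{A/R}\otimes_Ak).$$

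Next I would identify the coefficient module on the right with $I_{A/R}^k$. This is precisely the computation already carried out in Remark~\ref{RmkBlah} (and recorded as Remark~\ref{RmkBlah2}): applying Lemma~\ref{L1} to the defining fiber sequence of $I_{A/R}$ gives $k\otimes_AI_{A/R}\otimes_Ak\cong I_{A/R}^k$. Combining the last two equivalences and unfolding $\text{THH}^R(k;-)=k\otimes_{k\otimes_Rk}(-)$ yields
$$\text{Ram}^R(A/R)\otimes_Ak\cong k\otimes_{k\otimes_Rk}I_{A/R}^k\cong I_{A/R}^k\otimes_{k\otimes_Rk}k,$$
which is the first claim. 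The ``thus'' clause is then a direct translation: by Definition~\ref{DefTotalRam}, $A/R$ is unramified at $k$ exactly when the left-hand side of the displayed equation vanishes, which by the just-established equivalence is equivalent to $\text{Ram}^R(A/R)\otimes_Ak\cong 0$.

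There is no real obstacle: everything reduces to an application of inner base change together with the identification of $I_{A/R}^k$ as a two-sided base change of $I_{A/R}$. The only point that merits care is keeping track of which tensor factor plays which role when invoking Proposition~\ref{PropInnerBase}, so that the coefficient module transforms correctly from $I_{A/R}$ to $k\otimes_AI_{A/R}\otimes_Ak$.
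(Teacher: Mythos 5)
Your proof is correct and follows essentially the same route as the paper's: the paper's proof is a one-line citation of Remark~\ref{RmkBlah2} (which is itself inner base change applied exactly as you describe), plus the definitional identification $I_{A/R}^k\otimes_{k\otimes_Rk}k=\text{THH}^R(k;I_{A/R}^k)$. You have simply unfolded the ingredients of that remark explicitly, which does not change the argument.
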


\begin{proof}
$I_{A/R}^k\otimes_{k\otimes_Rk}k=\text{THH}^R(k;I_{A/R}^k)$ by definition, which is equivalent to $\text{Ram}^R(A/R)\otimes_Ak$ by Remark \ref{RmkBlah2}.
\end{proof}

\begin{remark}
If $A/R$ is totally ramified at $k$, then there is only one $k$-module structure on $I_{A/R}^k$ which induces the intrinsic $k\otimes_Rk$-module structure; there is no ambiguity in speaking of `the' $k$-module $I_{A/R}^k$.

To see this, assume $M$ is a $k\otimes_Rk$-module, and restrict along either of the inclusions $k\to k\otimes_Rk$ to induce a $k$-module $M^\prime$ (with $M^\prime\cong M$ as spectra). If $M$ is induced by a $k$-module $M^{\prime\prime}$, then since the composite $k\to k\otimes_Rk\to k$ is the identity, $M^{\prime\prime}\cong M^\prime$.

This also means that if $f:M\to N$ is a map of $k\otimes_Rk$-modules, and $M,N$ are induced by $k$-modules, then $f$ admits the structure of a $k$-module map (again, by restricting along either inclusion $k\to k\otimes_Rk$).
\end{remark}

\begin{example}\label{ExEtale}
If $A/R$ is \'etale, $A/R$ is unramified for all $k$ (Example \ref{ExED}).
\end{example}

\noindent Some intuition for the general theory can be gleaned from the case $k=A$:

\begin{remark}
Choosing $k=A$, the following are equivalent:
\begin{enumerate}
\item $A/R$ is unramified at $A$;
\item $\text{Ram}^R(A/R)\cong 0$;
\item $A\to\text{THH}^R(A)$ is an equivalence.
\end{enumerate}
\end{remark}

\begin{proposition}\label{PropExample}
Choosing $k=A$, the following are equivalent:
\begin{enumerate}
\item $A/R$ is totally ramified at $A$;
\item $A\otimes_RA$ is an induced $A$-module;
\item $A\otimes_RA\to A$ is an equivalence ($A$ is a \emph{solid} $R$-algebra);
\item $A/R$ is simultaneously totally ramified and unramified at $A$.
\end{enumerate}
\end{proposition}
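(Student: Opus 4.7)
The strategy is the cyclic chain (3) $\Rightarrow$ (4) $\Rightarrow$ (1) $\Rightarrow$ (2) $\Rightarrow$ (3). Write $\mu\colon B\to A$ for the multiplication map, where $B:=A\otimes_R A$, with coprojections $\eta_1,\eta_2\colon A\to B$ (both satisfying $\mu\eta_i = \text{id}_A$).

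The first three implications are essentially formal: (3) forces $I_{A/R}\cong 0$, which trivially carries an $A$-module structure and trivially satisfies $I_{A/R}\otimes_{B}A\cong 0$, giving (4); (4) includes (1) by definition; and (1) $\Rightarrow$ (2) holds because the essential image of the restriction functor $\mu^*\colon \text{Mod}_A \to \text{Mod}_B$ is closed under fibers and cofibers (any $B$-module map between two objects in this image is automatically $A$-linear, since the $B$-action on each factors through $\mu$). Applied to the fiber sequence $I_{A/R}\to B \to A$ of $B$-modules, the presence of both $A = \mu^* A$ and $I_{A/R}$ in the image forces $B$ into the image as well.

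The crux of the argument is (2) $\Rightarrow$ (3). Assume the $B$-action on $B$ factors as $\text{mult}_B = \bar\alpha \circ (\mu\otimes \text{id}_B)$ for some $\bar\alpha\colon A\otimes B \to B$. Since $\mu\eta_i = \text{id}_A$, we obtain $\bar\alpha = \text{mult}_B \circ (\eta_i\otimes \text{id}_B)$ for both $i=1,2$, so these two maps agree; precomposing with the unit $A\otimes \mathbb{S} \to A\otimes B$ yields $\eta_1 \cong \eta_2$ as maps of underlying spectra. Exploiting that $\otimes$ and $\text{mult}_B$ are homotopy-invariant, the chain
\[
\text{id}_B = \text{mult}_B \circ (\eta_1\otimes \eta_2) \cong \text{mult}_B \circ (\eta_1\otimes \eta_1) = \eta_1 \circ \mu
\]
holds as spectrum maps $B\to B$. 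Combined with $\mu\eta_1 = \text{id}_A$, this shows that $\mu$ is an equivalence of underlying spectra, hence an equivalence of $\mathbb{E}_\infty$-$R$-algebras since the forgetful functor $\text{CAlg}_R\to\text{Sp}$ is conservative.

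The principal obstacle is the final computation in (2) $\Rightarrow$ (3): one must justify the element-level manipulation $ab\otimes 1 = (a\otimes 1)(b\otimes 1) \cong (a\otimes 1)(1\otimes b) = a\otimes b$ as a coherent spectrum-level homotopy, obtained by whiskering the homotopy $\eta_1\cong\eta_2$ through the multiplication of $B$. Once this is made rigorous, the rest is bookkeeping involving the universal property of $B = A\amalg_R A$ in $\text{CAlg}_R$ and conservativity of restriction.
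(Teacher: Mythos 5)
Your cyclic chain (3)~$\Rightarrow$~(4)~$\Rightarrow$~(1)~$\Rightarrow$~(2)~$\Rightarrow$~(3) is sound in structure, and the paper's own proof takes essentially the same route (it proves (1)~$\Leftrightarrow$~(2) using the same fiber sequence, then (2)~$\Rightarrow$~(3) via a map $i\colon A\to B$ that is inverse to $\mu$, then (3)~$\Rightarrow$~(4)~$\Rightarrow$~(1) formally). Your (1)~$\Rightarrow$~(2) via closure of the essential image of $\mu^*$ under fibers and cofibers is the right idea and matches the remark the paper makes after Proposition~\ref{PropEasy}; the point that a $B$-module map between induced modules is automatically $A$-linear uses that $\mu\otimes\mathrm{id}$ is split epi (by $\eta_i\otimes\mathrm{id}$), so one can cancel.

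The gap is in the final chain of (2)~$\Rightarrow$~(3). You correctly establish $\bar\alpha = \mathrm{mult}_B\circ(\eta_1\otimes\mathrm{id}_B) = \mathrm{mult}_B\circ(\eta_2\otimes\mathrm{id}_B)$, and that precomposing with the unit yields $\eta_1\simeq\eta_2$. But this homotopy lives a priori only in the mapping \emph{spectrum} $\mathrm{Map}_{\mathrm{Sp}}(A,B)$, not in $\mathrm{Map}_{\mathrm{Mod}_R}(A,B)$. To make sense of your middle equivalence $\mathrm{mult}_B\circ(\eta_1\otimes\eta_2)\simeq\mathrm{mult}_B\circ(\eta_1\otimes\eta_1)$ \emph{as maps $B\to B$} you need $\otimes_R$, and then the whiskering $\eta_1\otimes_R(-)$ is a functor on $R$-module maps, so you cannot feed it a mere spectrum-level homotopy. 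If instead you use $\otimes_{\mathbb{S}}$, the whiskering is legitimate, but the two endpoints become maps $A\otimes_{\mathbb{S}}A\to B$ (the canonical projection $q$ and $\eta_1\circ\mu\circ q$ respectively), and $q$ is not split, so you cannot cancel it to conclude $\mathrm{id}_B\simeq\eta_1\mu$.

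The fix is to bypass $\eta_1\simeq\eta_2$ entirely. From $\mathrm{mult}_B = \bar\alpha\circ(\mu\otimes\mathrm{id}_B)$, precompose with $\mathrm{id}_B\otimes u_B$ (where $u_B\colon\mathbb{S}\to B$ is the unit) and use unitality on the left to get
$\mathrm{id}_B \simeq \bar\alpha\circ(\mu\otimes u_B).$
Now $(\mu\otimes u_B) = (\mathrm{id}_A\otimes u_B)\circ\mu$, and $\bar\alpha\circ(\mathrm{id}_A\otimes u_B) = \mathrm{mult}_B\circ(\eta_1\otimes u_B) = \eta_1$ (again unitality, using $\bar\alpha=\mathrm{mult}_B\circ(\eta_1\otimes\mathrm{id}_B)$). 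So $\eta_1\circ\mu\simeq\mathrm{id}_B$. Combined with $\mu\eta_1=\mathrm{id}_A$, this shows $\mu$ is an equivalence. This is in fact what the paper does, phrased as $m$ and $i$ being mutually inverse where $i$ is the unique $A$-module map $A\to B$ sending unit to unit.
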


\begin{proof}
By definition, $A/R$ is totally ramified at $A$ if and only if $I_{A/R}$ is an induced $A$-module. By the fiber sequence $I\to A\otimes_RA\to A$, $I$ is an induced $A$-module if and only if $A\otimes_RA$ is an induced $A$-module. So (1) and (2) are equivalent.

If $A\otimes_RA$ is induced, $A\otimes_RA\xrightarrow{m}A\xrightarrow{i}A\otimes_RA$ is equivalent to the identity, where $i$ is the unique $A$-module map which sends the unit of $A$ to the unit of $A\otimes_RA$. In particular, inclusion $A\to A\otimes_RA$ into either of the $A$ factors is equivalent to $i$, so that $A\xrightarrow{i}A\otimes_RA\xrightarrow{m}A$ is also equivalent to the identity. That is, $m$ and $i$ are inverse, so $m$ is an equivalence. This proves (2) implies (3).

On the other hand, (3) implies that $I_{A/R}^A\cong 0$, so $A/R$ is both totally ramified and unramified at $A$. This proves (3) implies (4), and (4) implies (1) tautologically.
\end{proof}

\noindent More generally, we more-or-less fully understand the extensions which are totally ramified and unramified at the same time.

\begin{proposition}\label{PropBoth}
For arbitrary $k$, the following are equivalent:
\begin{itemize}
\item $A/R$ is both totally ramified and unramified at $k$;
\item $I_{A/R}^k\cong 0$;
\item $k\otimes_Rk\to k\otimes_Ak$ is an equivalence.
\end{itemize}
\end{proposition}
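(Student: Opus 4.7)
The plan is to handle $(2)\Leftrightarrow(3)$ and $(2)\Rightarrow(1)$ as formalities, leaving $(1)\Rightarrow(2)$ as the substantive step. Equivalence of $(2)$ and $(3)$ is immediate from the definition of $I^k_{A/R}$ as the homotopy fiber of $k\otimes_R k\to k\otimes_A k$. The implication $(2)\Rightarrow(1)$ is trivial, since the zero module is induced from the zero $k$-module and satisfies the unramified condition.

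For $(1)\Rightarrow(2)$, write $\mu:k\otimes_R k\to k$ for the multiplication. Total ramification provides a $k$-module $N$ with $I^k_{A/R}\cong\mu^*N$ as $k\otimes_R k$-modules. First I would identify $\mu^*N\cong N\otimes_k\mu^*k$ (both sides are $N$ as spectra, with $k\otimes_R k$ acting through $\mu$), and then compute
$$I^k_{A/R}\otimes_{k\otimes_R k}k\;\cong\;(N\otimes_k\mu^*k)\otimes_{k\otimes_R k}\mu^*k\;\cong\;N\otimes_k(\mu^*k\otimes_{k\otimes_R k}\mu^*k)\;=\;N\otimes_k\text{THH}^R(k),$$
which rewrites the unramified hypothesis as $N\otimes_k\text{THH}^R(k)\cong 0$.

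The crux is then to exhibit an augmentation $\epsilon:\text{THH}^R(k)\to k$ splitting the unit $\eta:k\to\text{THH}^R(k)$. Since $\text{THH}^R(k)=k\otimes_{k\otimes_R k}k$ is a pushout, such an $\epsilon$ is the map on pushouts induced by the map of spans $(k\xleftarrow{\mu}k\otimes_R k\xrightarrow{\mu}k)\to(k\xleftarrow{\mathrm{id}}k\xrightarrow{\mathrm{id}}k)$ given by $(\mathrm{id},\mu,\mathrm{id})$; the required commutativity of the two squares is automatic because each inclusion $k\to k\otimes_R k$ composed with $\mu$ is $\mathrm{id}_k$. The same description makes $\epsilon\circ\eta=\mathrm{id}_k$ manifest, since $\eta$ is the inclusion of (say) the left end of the span and $\epsilon$ restricts there to the identity. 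Hence $N\cong N\otimes_k k$ is a retract of $N\otimes_k\text{THH}^R(k)\cong 0$, so $N\cong 0$ and $I^k_{A/R}\cong\mu^*N\cong 0$.

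I expect the only genuine obstacle to be producing the retraction $(\eta,\epsilon)$; everything else is formal manipulation with base change and restriction of scalars along $\mu$.
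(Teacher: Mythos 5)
Your proof is correct and follows essentially the same route as the paper: both identify the unramified condition as $\text{THH}^R(k;I^k_{A/R})\cong 0$, use total ramification to rewrite this as $\text{THH}^R(k)\otimes_k I^k_{A/R}$, and then exploit the retraction $k\to\text{THH}^R(k)\to k$ to split off a copy of $I^k_{A/R}$ and conclude it vanishes. Your construction of the augmentation via the map of spans $(\mathrm{id},\mu,\mathrm{id})$ is just a spelled-out version of the paper's parenthetical remark that $k\to k\otimes_{k\otimes_Rk}k\to k$ is the identity.
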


\begin{proof}
By definition, $A/R$ is unramified at $k$ if and only if $0\cong\text{THH}^R(k;I_{A/R}^k)$. If $A/R$ is also totally ramified, then $\text{THH}^R(k;I_{A/R}^k)\cong\text{THH}^R(k)\otimes_kI_{A/R}^k$. Now a copy of $k$ splits off of $\text{THH}^R(k)$ (since $k\to k\otimes_{k\otimes_Rk}k\to k$ is the identity), so a copy of $I^k_{A/R}$ splits off of $\text{THH}^R(k)\otimes_kI_{A/R}^k$. Therefore, the latter is $0$ if and only if $I^k_{A/R}\cong 0$, which by definition means $k\otimes_Rk\to k\otimes_Ak$ is an equivalence.
\end{proof}

\section{Ramification for rings of integers}
\begin{theorem}\label{ThmEquiv}
Suppose $L/K$ is an extension of number fields with rings of integers $A/R$, $\mathfrak{p}$ is a prime in $R$, and $\mathfrak{p}^\prime|\mathfrak{p}$ is a prime in $A$. Then:
\begin{itemize}
\item $A/R$ is totally ramified at $\mathfrak{p}$ (in the classical sense) if and only if $A/R$ is totally ramified at $A/\mathfrak{p}^\prime$ (in the sense of Definition \ref{DefTotalRam});
\item $A/R$ is unramified at $\mathfrak{p}$ (in the classical sense) if and only if $A/R$ is unramified at $A/\mathfrak{p}^\prime$ (in the sense of Definition \ref{DefTotalRam}).
\end{itemize}
\end{theorem}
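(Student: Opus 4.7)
The plan is to reduce to the local case of complete DVRs, dispose of the forward implications using results already in hand, and handle the converses via Proposition \ref{PropTriple}.

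\emph{Reduction and forward implications.} Let $p$ be the residue characteristic of $k=A/\mathfrak{p}'$. By Proposition \ref{PropComplete} we may $p$-complete. Using the Chinese Remainder decompositions $A^\wedge_p\cong\prod_{\mathfrak{q}'|p}A^\wedge_{\mathfrak{q}'}$ and $R^\wedge_p\cong\prod_{\mathfrak{q}|p}R^\wedge_\mathfrak{q}$, and the fact that the orthogonal idempotents collapse the action on $k$ to the single factors $A^\wedge_{\mathfrak{p}'}$ and $R^\wedge_\mathfrak{p}$, we reduce to an extension $R^\wedge_\mathfrak{p}\to A^\wedge_{\mathfrak{p}'}$ of complete DVRs in mixed characteristic. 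Factor it classically as $R^\wedge_\mathfrak{p}\to B\to A^\wedge_{\mathfrak{p}'}$ where $B$ is the maximal unramified subextension (residue field $k$): $B/R^\wedge_\mathfrak{p}$ is étale, $A^\wedge_{\mathfrak{p}'}/B$ is classically totally ramified, and the classical notions at $\mathfrak{p}'$ correspond to $B=A^\wedge_{\mathfrak{p}'}$ (unramified) or $B=R^\wedge_\mathfrak{p}$ (totally ramified). The forward implications are then immediate: if $B=A^\wedge_{\mathfrak{p}'}$ then the extension is étale, so Example \ref{ExED} and Proposition \ref{PropEasy} give $I^k_{A/R}\otimes_{k\otimes_Rk}k=0$; if $B=R^\wedge_\mathfrak{p}$ then Theorem \ref{MainLemma} directly furnishes an equivalence $I^k_{A/R}\cong\Omega^1\otimes_Ak$ of $k\otimes_Rk$-modules whose structure factors through multiplication.

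\emph{Totally ramified converse.} Proposition \ref{PropTriple} provides a fiber sequence of $k\otimes_Rk$-modules
$$I^k_{B/R^\wedge_\mathfrak{p}}\to I^k_{A^\wedge_{\mathfrak{p}'}/R^\wedge_\mathfrak{p}}\to I^k_{A^\wedge_{\mathfrak{p}'}/B}.$$
The left term satisfies $I^k_{B/R^\wedge_\mathfrak{p}}\otimes_{k\otimes_Rk}k=0$ (étale), and the right term is an induced $k$-module via multiplication by Theorem \ref{MainLemma}, with $k\otimes_Rk$-action factoring as $k\otimes_Rk\to k\otimes_Bk\to k$, which equals multiplication. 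Assume $I^k_{A/R}$ is an induced $k$-module via multiplication. Induced $k$-modules form the essential image of the fully faithful restriction of scalars along $k\otimes_Rk\to k$, hence are closed under fibers; so $I^k_{B/R^\wedge_\mathfrak{p}}$ is also induced. Combined with the étale vanishing, Proposition \ref{PropBoth} forces $I^k_{B/R^\wedge_\mathfrak{p}}=0$. A short derived Tor computation shows $\pi_0I^k_{B/R^\wedge_\mathfrak{p}}$ has $k$-dimension $f-1$, where $f$ is the residue degree of $\mathfrak{p}'\mid\mathfrak{p}$; hence $f=1$ and $B=R^\wedge_\mathfrak{p}$, so $A/R$ is classically totally ramified at $\mathfrak{p}'$.

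\emph{Unramified converse, and main obstacle.} Assume $I^k_{A/R}\otimes_{k\otimes_Rk}k=0$. Tensoring the fiber sequence with $k$ over $k\otimes_Rk$ and using the étale vanishing gives $I^k_{A^\wedge_{\mathfrak{p}'}/B}\otimes_{k\otimes_Rk}k=0$. Since $I^k_{A^\wedge_{\mathfrak{p}'}/B}$ is an induced $k$-module via multiplication, this tensor identifies with $I^k_{A^\wedge_{\mathfrak{p}'}/B}\otimes_k\text{THH}^{R^\wedge_\mathfrak{p}}(k)$; and $\text{THH}^{R^\wedge_\mathfrak{p}}(k)\cong k[y]$ (with $|y|=2$) by Koszul duality applied to the exterior algebra $k\otimes_{R^\wedge_\mathfrak{p}}k\cong k[\epsilon]/(\epsilon^2)$. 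Vanishing thus forces $I^k_{A^\wedge_{\mathfrak{p}'}/B}=0$, and Lemma \ref{L2} gives $A^\wedge_{\mathfrak{p}'}=B$. The main obstacle throughout is tracking $k$-module compatibility across the fiber sequence of Proposition \ref{PropTriple}, specifically ensuring that $I^k_{A^\wedge_{\mathfrak{p}'}/B}$'s intrinsic $k\otimes_Rk$-structure factors through the same multiplication map as that of $I^k_{A/R}$; this is what licenses both the Proposition \ref{PropBoth} argument and the THH identification.
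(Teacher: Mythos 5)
Your proposal follows the paper's strategy almost exactly: reduce to complete DVRs by $p$-completion, factor through the maximal unramified subextension $B$, get the forward implications from Theorem \ref{MainLemma} and \'etale descent, and run the converses through the fiber sequence of Proposition \ref{PropTriple} together with the observation that an extension which is simultaneously totally ramified and unramified at $k$ must be degenerate (Proposition \ref{PropBoth}). Your endgame differs only cosmetically from the paper's: where you compute $\dim_k\pi_0I^k_{B/R^\wedge_\mathfrak{p}}=f-1$ and invoke Lemma \ref{L2} to rule out $e>1$, the paper deduces $I_{R^\prime/R}\cong 0$ by a Nakayama-type argument and concludes $R^\prime=R$; both work.

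The one point that needs repair is your justification for closure of induced $k$-modules under fibers. Restriction of scalars along $m\colon k\otimes_Rk\to k$ is \emph{not} fully faithful in the derived/spectral setting: its left adjoint is $-\otimes_{k\otimes_Rk}k$, and the counit on an induced module $m^\ast N$ is $N\otimes_k\mathrm{THH}^R(k)\to N$, which is far from an equivalence (indeed $\mathrm{THH}^R(k)\simeq k[y]$ with $|y|=2$, which is the whole reason Proposition \ref{PropBoth} has content). The correct argument is the one the paper uses in the remark after Proposition \ref{PropEasy} and in Lemma \ref{LemTriple}: $m$ admits a section $s\colon k\to k\otimes_Rk$ with $m\circ s=\mathrm{id}$, so a map of $k\otimes_Rk$-modules between induced modules can be promoted to a map of $k$-modules by restricting along $s$, and the fiber of that $k$-module map restricts along $m$ to the fiber you want. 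With that substitution (or simply by citing Lemma \ref{LemTriple}), your proof goes through.
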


\noindent The proof boils down to Theorem \ref{MainLemma}, but first we need two lemmas.

\begin{lemma}\label{LemP}
Let $k$ be $p$-complete for some prime $p$. Then:
\begin{itemize}
\item $A/R$ is unramified at $k$ if and only if $A^\wedge_p/R^\wedge_p$ is unramified at $k$;
\item $A/R$ is totally ramified at $k$ if and only if $A^\wedge_p/R^\wedge_p$ is.
\end{itemize}
\end{lemma}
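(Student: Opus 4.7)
The plan is to deduce the lemma from Proposition~\ref{PropComplete}, which gives $(I_{A/R}^k)^\wedge_p\cong I_{A^\wedge_p/R^\wedge_p}^{k^\wedge_p}$. Since $k$ is $p$-complete we have $k^\wedge_p=k$, so $(I_{A/R}^k)^\wedge_p\cong I_{A^\wedge_p/R^\wedge_p}^k$, and similarly $\text{Ram}^R(A/R)^\wedge_p\cong\text{Ram}^{R^\wedge_p}(A^\wedge_p/R^\wedge_p)$. The task is to show that both ramification conditions at $k$ are invariant under $p$-completion of $R$ and $A$.

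For the unramified case, I first apply Proposition~\ref{PropEasy} to rewrite the condition as the vanishing of $\text{Ram}^R(A/R)\otimes_Ak$. The goal is to identify $\text{Ram}^R(A/R)\otimes_Ak\cong\text{Ram}^{R^\wedge_p}(A^\wedge_p/R^\wedge_p)\otimes_{A^\wedge_p}k$, whence the two vanishings agree. Writing $\text{Ram}^R(A/R)\otimes_Ak\cong(\text{Ram}^R(A/R)\otimes_AA^\wedge_p)\otimes_{A^\wedge_p}k$, this reduces to checking that the cofiber $C$ of the natural map $M\otimes_AA^\wedge_p\to M^\wedge_p$ (for $M=\text{Ram}^R(A/R)$) vanishes after tensoring with $k$ over $A^\wedge_p$. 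Since both terms have the same $p$-completion, $C^\wedge_p\cong 0$, so $p$ acts invertibly on $C$; combined with $k$ being $p$-torsion (as in the intended application to residue fields of characteristic $p$), this forces $C\otimes_{A^\wedge_p}k\cong 0$.

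For the totally ramified case, the forward direction is direct: a $k$-module structure on $I_{A/R}^k$ inducing the intrinsic $k\otimes_Rk$-structure through multiplication passes to a $k$-module structure on $I_{A^\wedge_p/R^\wedge_p}^k$ inducing the $k\otimes_{R^\wedge_p}k$-structure via the completed multiplication, using the compatible factorization $k\otimes_Rk\to k\otimes_{R^\wedge_p}k\to k$. For the converse, I would observe that $I_{A/R}^k$ is itself $p$-complete: in the intended application, $k$ is a finite residue field of characteristic $p$, so every $k$-module (in particular $I_{A/R}^k$, which is built from $k$-modules by tensor products and fibers) has $p$-torsion homotopy in each degree and is hence $p$-complete. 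This upgrades the Proposition~\ref{PropComplete} equivalence to a literal one $I_{A/R}^k\cong I_{A^\wedge_p/R^\wedge_p}^k$, across which a $k$-module structure on either side transfers to the other. The main technical obstacle is verifying the $p$-completeness of $I_{A/R}^k$ (and similarly of $\text{Ram}^R(A/R)\otimes_Ak$) with adequate generality; the case where $k$ has characteristic $p$, which covers all applications in the paper, is automatic and suffices for Theorem~\ref{ThmEquiv}.
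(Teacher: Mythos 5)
Your proof takes essentially the same approach as the paper's: the paper's entire argument is the one-line observation that, by Proposition~\ref{PropComplete}, $I_{A^\wedge_p/R^\wedge_p}^k\cong(I_{A/R}^k)^\wedge_p\cong I_{A/R}^k$, which is exactly the identification you build up to (and then unpack for the module-structure transfer). You correctly spot that the final equivalence $(I_{A/R}^k)^\wedge_p\cong I_{A/R}^k$, which the paper asserts without comment, actually needs more than $k$ being $p$-complete — it uses that $p$ acts nilpotently on $\pi_0 k$ (so every $k$-module is automatically $p$-complete), which holds in the characteristic-$p$ residue field case that Theorem~\ref{ThmEquiv} requires; this is a reasonable clarification of the paper's terse proof rather than a departure from it.
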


\begin{proof}
By Proposition \ref{PropComplete}, $I_{A^\wedge_p/R^\wedge_p}^k\cong(I_{A/R}^k)^\wedge_p\cong I_{A/R}^k$.
\end{proof}

\begin{lemma}\label{LemTriple}
If $A\to B\to C\to k$ are commutative ring spectra, then any two of the following imply the third:
\begin{itemize}
\item $B/A$ is totally ramified at $k$;
\item $C/B$ is totally ramified at $k$;
\item $C/A$ is totally ramified at $k$.
\end{itemize}
The same is true for unramified extensions.
\end{lemma}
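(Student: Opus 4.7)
The plan is to deduce both statements from the fiber sequence $I_{B/A}^k\to I_{C/A}^k\to I_{C/B}^k$ in $\text{Mod}_{k\otimes_Ak}$ provided by Proposition \ref{PropTriple}, where the third term is naturally a $k\otimes_Bk$-module restricted along $k\otimes_Ak\to k\otimes_Bk$.

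For the totally ramified version, ``$A/R$ totally ramified at $k$'' means the $k\otimes_Rk$-action on $I_{A/R}^k$ factors through the multiplication $k\otimes_Rk\to k$. Since the multiplication $k\otimes_Ak\to k$ factors as $k\otimes_Ak\to k\otimes_Bk\to k$, the condition that the action on $I_{C/B}^k$ factors through $k$ is the same whether $I_{C/B}^k$ is viewed as a $k\otimes_Ak$- or a $k\otimes_Bk$-module. All three totally ramified conditions thus become conditions on $k\otimes_Ak$-modules being induced from $k$-modules, and since this full subcategory is closed under fibers, the 2-of-3 property follows directly from the fiber sequence.

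For the unramified version, I would apply $-\otimes_{k\otimes_Ak}k$ (which preserves fiber sequences, being a left adjoint) to obtain a fiber sequence $X_{B/A}\to X_{C/A}\to X'_{C/B}$ of $k$-modules, where $X_{B/A}$ and $X_{C/A}$ are exactly the $B/A$ and $C/A$ unramified obstructions. The third term $X'_{C/B}:=I_{C/B}^k\otimes_{k\otimes_Ak}k$ differs a priori from the $C/B$ obstruction $X_{C/B}:=I_{C/B}^k\otimes_{k\otimes_Bk}k$, but associativity together with the identification $k\otimes_Bk\cong(k\otimes_Ak)\otimes_{B\otimes_AB}B$ gives $X'_{C/B}\cong I_{C/B}^k\otimes_{k\otimes_Bk}\text{THH}^A(B;k)$. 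Tensoring the short fiber sequence $I_{B/A}^k\to k\otimes_Ak\to k\otimes_Bk$ with $k$ over $k\otimes_Ak$ yields $X_{B/A}\to k\to\text{THH}^A(B;k)$, so $\text{THH}^A(B;k)\cong k$ precisely when $B/A$ is unramified at $k$. Hence $X'_{C/B}\cong X_{C/B}$ whenever $B/A$ is unramified, which handles the two implications in which $B/A$ is among the hypotheses.

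The remaining case---$C/A$ and $C/B$ unramified at $k$ implying $B/A$ unramified at $k$---is where I expect the main technical obstacle. From $X_{C/A}\cong 0$, the fiber sequence gives $X_{B/A}\cong\Omega X'_{C/B}$, so it suffices to show $X'_{C/B}\cong 0$; but the hypothesis $X_{C/B}\cong 0$ only yields $X'_{C/B}\otimes_{\text{THH}^A(B)}B\cong 0$ via outer base change (Corollary \ref{CorOuterBase}). I plan to close the gap through a faithfulness argument for the base change along the augmentation $\text{THH}^A(B)\to B$, which together with the connectivity constraints imposed by $X_{B/A}\cong\Omega X'_{C/B}$ should force $X'_{C/B}\cong 0$.
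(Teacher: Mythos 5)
Your argument follows the paper's strategy -- the fiber sequence from Proposition~\ref{PropTriple} plus a two-out-of-three argument -- but is more careful about a real subtlety the paper's terse proof glosses over: $I^k_{C/B}$ carries its defining module structure over $k\otimes_Bk$, not $k\otimes_Ak$. Your unramified analysis handles this with care and correctly isolates one implication ($C/A$ and $C/B$ unramified $\Rightarrow$ $B/A$ unramified) as the one the fiber-sequence argument does not settle. However, your totally ramified argument has exactly the same problem but treats it as a non-issue. The assertion that ``the condition that the action on $I^k_{C/B}$ factors through $k$ is the same whether $I^k_{C/B}$ is viewed as a $k\otimes_Ak$- or a $k\otimes_Bk$-module'' holds in only one direction: a module induced along $m_B:k\otimes_Bk\to k$ restricts along $\phi:k\otimes_Ak\to k\otimes_Bk$ to one induced along $m_A$, but not conversely. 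Taking $A=k$ makes $k\otimes_Ak\cong k$ and renders the $k\otimes_Ak$-condition vacuous, while the $k\otimes_Bk$-condition is not; concretely, for $k\to k[x]\to k[x,y]\to k$ both $B/A$ and $C/A$ are (vacuously) totally ramified at $k$, yet $I^k_{C/B}\cong k\otimes_{k[x]}k$ as a $k\otimes_Bk$-module -- free of rank one -- so it is not induced along $m_B$, and $C/B$ is not totally ramified at $k$.

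So the fiber-sequence argument only yields two of the three implications in each case: for totally ramified, the two in which $C/B$ appears as a hypothesis (so that you may \emph{restrict} its module structure from $k\otimes_Bk$ down to $k\otimes_Ak$); for unramified, the two in which $B/A$ appears as a hypothesis (forcing $\text{THH}^A(B;k)\cong k$ and hence $X'_{C/B}\cong X_{C/B}$, as you observe). Your acknowledged gap in the third unramified implication is genuine, and the faithfulness argument you sketch will meet the same obstruction exhibited above. Fortunately the two directions actually invoked in the proof of Theorem~\ref{ThmEquiv} -- $C/A,\,C/B\Rightarrow B/A$ for totally ramified and $B/A,\,C/A\Rightarrow C/B$ for unramified -- are among those that do follow, so the downstream results are unaffected; but the lemma as stated is stronger than what either your argument or the paper's one-line proof actually establishes.
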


\begin{proof}
By Proposition \ref{PropTriple}, there is a fiber sequence $I_{B/A}^k\to I_{C/A}^k\to I_{C/B}^k$. If any two of these are induced $k$-modules, then so is the third. This proves the totally ramified statement.

Applying $\text{THH}^S(k;-)$ proves the unramified statement.
\end{proof}

\begin{proof}[Proof of Theorem \ref{ThmEquiv}]
By $p$-completing $R$ and $A$ (using Lemma \ref{LemP}), it suffices to prove:
\end{proof}

\begin{proposition}
Suppose $A/R$ is an extension of complete DVRs in mixed characteristic with perfect residue field, with uniformizers $p\in R$ and $\pi\in A$. Then:
\begin{itemize}
\item $A/R$ is totally ramified at $p$ if and only if it is totally ramified at $A/\pi$;
\item $A/R$ is unramified at $p$ if and only if it is unramified at $A/\pi$.
\end{itemize}
\end{proposition}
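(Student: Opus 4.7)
The plan is to leverage the canonical two-step filtration of a complete DVR extension with perfect residue field: there is a unique intermediate ring $R \subseteq A_0 \subseteq A$, where $A_0 \cong W(k_A)$ is the ring of Witt vectors of the residue field $k_A = A/\pi$, so that $A_0/R$ is \'etale of residue degree $f = [k_A:k_R]$ and $A/A_0$ is classically totally ramified of ramification index $e$. In this language, $A/R$ is classically unramified iff $A = A_0$ (i.e.\ $e = 1$), and classically totally ramified iff $A_0 = R$ (i.e.\ $f = 1$).

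Applying Proposition \ref{PropTriple} to $R \to A_0 \to A$ gives a fiber sequence $I_{A_0/R}^{k_A} \to I_{A/R}^{k_A} \to I_{A/A_0}^{k_A}$, whose two end pieces behave predictably. Since $A_0/R$ is \'etale, Example \ref{ExED} together with Proposition \ref{PropEasy} shows that $A_0/R$ is newly unramified at $k_A$; since $A/A_0$ is classically totally ramified, Theorem \ref{MainLemma} shows that $A/A_0$ is newly totally ramified at $k_A$. The forward directions of both iffs are now immediate: if $f=1$ the first piece degenerates and Theorem \ref{MainLemma} applies to $A/R = A/A_0$; if $e=1$ the second piece degenerates and \'etale descent applies to $A/R = A_0/R$. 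For the converses I would appeal to Lemma \ref{LemTriple}: since $A_0/R$ is always newly unramified at $k_A$, ``$A/R$ newly unramified at $k_A$'' passes to ``$A/A_0$ newly unramified at $k_A$'', and since $A/A_0$ is always newly totally ramified at $k_A$, ``$A/R$ newly totally ramified at $k_A$'' passes to ``$A_0/R$ newly totally ramified at $k_A$''.

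It then remains to prove two subclaims: (i) if $A/A_0$ is newly unramified at $k_A$, then $e = 1$; and (ii) if $A_0/R$ is newly totally ramified at $k_A$, then $f = 1$. Both will follow from Proposition \ref{PropBoth}, which says that an extension is simultaneously newly unramified and newly totally ramified at $k_A$ iff $I^{k_A}$ vanishes. For (i), $A/A_0$ is always newly totally ramified, so adding ``newly unramified'' forces $I_{A/A_0}^{k_A} \cong \Omega^1_{A/A_0} \otimes_A k_A \cong 0$; but Lemma \ref{L2} identifies $H_0$ of this object as $k_A$, which is nonzero whenever the differential exponent $d \geq 1$, equivalently whenever $e \geq 2$. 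The main obstacle will be (ii), where I expect to check by a direct computation on $\pi_0$ that the map $k_A \otimes_R k_A \to k_A \otimes_{A_0} k_A$ fails to be an equivalence when $f > 1$: the left side has $\pi_0 \cong k_A \otimes_{k_R} k_A$ (a $k_A$-algebra of dimension $f$), the right side has $\pi_0 \cong k_A$, and the induced map is the multiplication map, surjective with kernel of $k_A$-dimension $f-1$, so the map is an isomorphism on $\pi_0$ only if $f = 1$. Combining (i) and (ii) with the reductions above completes the proof.
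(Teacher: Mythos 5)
Your proof is correct and follows essentially the same route as the paper: decompose $A/R$ as an unramified extension followed by a totally ramified one, get the forward implications from Theorem \ref{MainLemma} and \'etale descent, and use Lemma \ref{LemTriple} plus Proposition \ref{PropBoth} to reduce the converses to showing that an extension which is simultaneously (newly) unramified and totally ramified at the residue field must be trivial. The only real divergence is in that last step: the paper argues once and for all that $I^k_{A/R}\cong 0$ forces $I_{A/R}\cong 0$ (a derived Nakayama--type argument using completeness) and hence $A=R$ by a rank count, whereas you rule out the two degenerate cases separately and more concretely --- via the $H_0\cong k$ computation of Lemma \ref{L2} for the totally ramified piece, and a $\pi_0$ computation of $k_A\otimes_{k_R}k_A\to k_A$ for the unramified piece. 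Both work; yours avoids the completeness argument at the cost of two case-specific checks (and note that your appeal to Lemma \ref{L2} correctly requires $d\geq 1$, i.e.\ $e\geq 2$, since the lemma degenerates for the trivial extension). One cosmetic point: $A_0$ is the maximal unramified subextension of $A/R$, which is $W(k_A)$ only when $R$ is absolutely unramified; this does not affect the argument.
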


\begin{proof}
If $A/R$ is totally ramified at $p$, then $A/R$ is totally ramified at $A/\pi$ by Theorem \ref{MainLemma}. If $A/R$ is unramified at $p$, then it is \'etale, so $A/R$ is unramified at $A/\pi$ by Example \ref{ExEtale}.

We still need to prove the converses. By Proposition \ref{PropBoth}, if $A/R$ is both totally ramified and unramified at $A/\pi$, then $0\cong I_{A/R}^k\cong k\otimes_AI_{A/R}\otimes_Ak$, which (together with the fact that $I_{A/R}^k$ is rationally $0$) means $I_{A/R}\cong 0$. Therefore $A\otimes_RA\to A$ is an equivalence, which can only be true if $A=R$.

Now any extension $A/R$ factors $A/R^\prime/R$ where $R^\prime/R$ is unramified and $A/R^\prime$ is totally ramified. If $A/R$ is totally ramified in the sense of Definition \ref{DefTotalRam}, then $R^\prime/R$ is both unramified and (by Lemma \ref{LemTriple}) totally ramified at $A/\pi$, which means $R^\prime=R$ (by the last paragraph) and therefore $A/R$ is totally ramified in the classical sense.

For the same reason, if $A/R$ is unramified in the sense of Definition \ref{DefTotalRam}, then it is unramified in the classical sense.
\end{proof}

\noindent Theorem \ref{ThmEquiv} tells us about the ramification of an extension of rings of integers at each residue field. We could also ask about the ramification at the field of fractions:

\begin{proposition}\label{PropFF}
If $L/K$ is an extension of number fields with rings of integers $A/R$, then $A/R$ is unramified at $L$. In fact, for any $S$, $$\text{Ram}^S(A/R)\otimes_AL\cong 0.$$
\end{proposition}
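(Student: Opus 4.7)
The plan is to chain two applications of Proposition \ref{PropTriple} along the tower of ring maps $R \to A \to L$, going through the fraction field $K$ of $R$, and to exploit the fact that the generic fiber of a ring of integers is a filtered colimit of \'etale localizations.

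First I would show that $\text{Ram}^S(L/A) \cong 0$ and $\text{Ram}^S(K/R) \cong 0$. Each of $L/A$ and $K/R$ is obtained by inverting every nonzero element of the base, so each is a filtered colimit of single-element localizations $B[1/x]$, with $x$ ranging over the nonzero elements. Each $B[1/x]$ is \'etale over $B$, so $\text{Ram}^S(B[1/x]/B) \cong 0$ by Example \ref{ExED}; since $\text{THH}^S(-)$ preserves filtered colimits of commutative ring spectra, this vanishing passes to the colimit. Since $L/K$ is a finite separable extension of fields, it is \'etale, so Example \ref{ExED} also gives $\text{Ram}^S(L/K) \cong 0$.

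Next I would feed the triple $R \to K \to L$ into Proposition \ref{PropTriple} to obtain a fiber sequence $\text{Ram}^S(K/R) \otimes_K L \to \text{Ram}^S(L/R) \to \text{Ram}^S(L/K)$. Both outer terms vanish by the previous paragraph, so $\text{Ram}^S(L/R) \cong 0$. Feeding the triple $R \to A \to L$ into Proposition \ref{PropTriple} then produces a fiber sequence $\text{Ram}^S(A/R) \otimes_A L \to \text{Ram}^S(L/R) \to \text{Ram}^S(L/A)$ whose right two terms are both $0$, giving $\text{Ram}^S(A/R) \otimes_A L \cong 0$ as asserted. Specializing to $S = R$ and invoking Proposition \ref{PropEasy} yields the unramified statement for $A/R$ at $L$.

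The one step to pin down carefully is the extension of \'etale descent (Example \ref{ExED}) from an individual \'etale map to the ind-\'etale localizations $L/A$ and $K/R$; this is the main obstacle, and it rests on the fact that $\text{THH}^S(-)$ commutes with filtered colimits of commutative ring spectra, which itself is immediate from the presentation $\text{THH}^S(R) = R \otimes_{R \otimes_S R} R$ as an iterated relative tensor product.
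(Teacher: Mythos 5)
Your proof is correct, and the final shape of the argument matches the paper's: reduce to $\operatorname{Ram}^S(L/K)$ and invoke \'etale descent for $L/K$. The genuine difference is in how the two auxiliary vanishings $\operatorname{Ram}^S(K/R)\cong 0$ and $\operatorname{Ram}^S(L/A)\cong 0$ are established. The paper observes directly that $K\otimes_RK\cong K$ (since $K=R\otimes_{\mathbb Z}\mathbb Q$ and $\mathbb Q\otimes_{\mathbb Z}\mathbb Q\cong\mathbb Q$), so inner base change (Proposition \ref{PropInnerBase}) gives $\operatorname{THH}^S(R)\otimes_RK\cong\operatorname{THH}^S(K;K\otimes_RK)\cong\operatorname{THH}^S(K)$ on the nose, for any $S$, and similarly for $A\to L$; then $\operatorname{Ram}^S(A/R)\otimes_AL\cong\operatorname{Ram}^S(L/K)$ drops out by rewriting the defining fiber sequence, without explicitly invoking Proposition \ref{PropTriple}. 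You instead realize $K/R$ and $L/A$ as filtered colimits of single-element (standard \'etale) localizations, use Example \ref{ExED} on each stage, and pass to the colimit using the fact that $\operatorname{THH}^S(-)$ commutes with filtered colimits; you then chain two applications of Proposition \ref{PropTriple}. Your route is sound but imports an external fact (preservation of filtered colimits by $\operatorname{THH}^S$, which is standard but not stated in the paper) and is a bit longer; what it buys is that it does not rely on the specific identity $\mathbb Q\otimes_{\mathbb Z}\mathbb Q\cong\mathbb Q$ and would apply verbatim to any ind-\'etale (e.g.\ Zariski) localization, making the generic-fiber vanishing a consequence of a more robust general principle rather than a special computation. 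Both then finish identically: $L/K$ is finite separable hence \'etale, so $\operatorname{Ram}^S(L/K)\cong 0$, and the unramified claim follows by taking $S=R$ and applying Proposition \ref{PropEasy}.
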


\begin{proof}
We have $K=R\otimes_\mathbb{Z}\mathbb{Q}$, and so $K\otimes_RK\cong R\otimes_\mathbb{Z}\mathbb{Q}\otimes_\mathbb{Z}\mathbb{Q}\cong K$. Therefore, $\text{THH}^S(R)\otimes_RK\cong\text{THH}^S(K;K\otimes_RK)\cong\text{THH}^S(K)$ and for the same reason $\text{THH}^S(A)\otimes_AL\cong\text{THH}^S(L)$.

By definition, $\text{Ram}^S(A/R)\otimes_AL$ is the fiber of $$\text{THH}^S(R)\otimes_RL\to\text{THH}^S(A)\otimes_AL\cong\text{THH}^S(L).$$ Using $\text{THH}^S(R)\otimes_RL\cong\text{THH}^S(R)\otimes_RK\otimes_KL\cong\text{THH}^S(K)\otimes_KL$, we conclude $\text{Ram}^S(A/R)\otimes_AL$ is the fiber of $\text{THH}^S(K)\otimes_KL\to\text{THH}^S(L)$, or $$\text{Ram}^S(A/R)\otimes_AL\cong\text{Ram}^S(L/K).$$ Since $L/K$ is a Galois extension (therefore \'etale), this is $0$.
\end{proof}

\section{Ramified descent for THH}
\noindent Recall that $\text{Ram}^S(A/R)\otimes_Ak\cong\text{THH}^S(k;I^k_{A/R})$, Remark \ref{RmkBlah2}. If $I^k_{A/R}$ is a $k$-module, then $\text{THH}^S(k;I^k_{A/R})\cong\text{THH}^S(k)\otimes_kI^k_{A/R}$. That is:

\begin{proposition}\label{PropKey}
If $A/R$ is totally ramified at $k$, $$\text{Ram}^S(A/R)\otimes_Ak\cong\text{THH}^S(k)\otimes_kI_{A/R}^k$$ as $\text{THH}^S(R)\otimes_Rk$-modules.
\end{proposition}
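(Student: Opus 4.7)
The plan is to unpack the definitions, as already hinted in the paragraph preceding the statement. First I would use Remark \ref{RmkBlah2} to rewrite $\text{Ram}^S(A/R)\otimes_A k$ as $\text{THH}^S(k;I^k_{A/R}) = k\otimes_{k\otimes_S k}I^k_{A/R}$, where $I^k_{A/R}$ is regarded as a $k\otimes_S k$-module by restriction along the map $k\otimes_S k\to k\otimes_R k$ induced by $R\to k$ in the middle tensor factor.

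The key step uses the total ramification hypothesis: since $I^k_{A/R}$ is induced from a $k$-module via multiplication $k\otimes_R k\to k$, it is also induced from the same $k$-module via the composite $k\otimes_S k\to k\otimes_R k\to k$. For any $k$-module $M$ viewed as a $k\otimes_S k$-module by restriction along multiplication, one has the formal computation
\[
k\otimes_{k\otimes_S k} M \;\cong\; (k\otimes_{k\otimes_S k} k)\otimes_k M \;=\; \text{THH}^S(k)\otimes_k M,
\]
which applied to $M=I^k_{A/R}$ yields the desired equivalence of spectra.

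Finally, I would verify that this equivalence respects the $\text{THH}^S(R)\otimes_R k$-module structures. The left side inherits its module structure from the fact that $\text{Ram}^S(A/R)$ is the fiber of a map of $\text{THH}^S(R)\otimes_R A$-modules, base-changed along $A\to k$; the right side acquires its structure from the canonical map $\text{THH}^S(R)\otimes_R k\to\text{THH}^S(k)$. The main potential obstacle is precisely this last compatibility check: one must trace the $\text{THH}^S(R)\otimes_R k$-action through the identification of Remark \ref{RmkBlah2} and through the splitting above. Both actions ultimately factor through $\text{THH}^S(k)$, so the compatibility follows from the associativity and functoriality of the relative tensor product together with inner base change (Proposition \ref{PropInnerBase}), but some careful bookkeeping is needed to see that the two module structures really do land in the same place.
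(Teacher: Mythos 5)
Your proof is correct and takes essentially the same route as the paper: Remark \ref{RmkBlah2} to rewrite the left side as $\text{THH}^S(k;I^k_{A/R})$, then the observation that when the coefficient module is induced from a $k$-module, the relative tensor product over $k\otimes_S k$ splits off $\text{THH}^S(k)$. The paper states this as a one-line consequence without spelling out the module-structure bookkeeping; your extra discussion at the end is a reasonable amplification rather than a different argument.
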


\noindent This proposition identifies $\text{Ram}^S(A/R)$ locally at the `point' $\text{Spec}(k)$. If we also have some lifting data, then in some cases we can even determine $\text{Ram}^S(A/R)$ itself:

\begin{definition}
$A/R$ is \emph{totally ramified at $k$ with lifting data} if there is an $A$-module $\Omega^1$ and a map $I_{A/R}\to\Omega^1$ of $A\otimes_RA$-modules, and the composite $$I_{A/R}^k\cong I\otimes_{A\otimes_RA}(k\otimes_Rk)\to\Omega^1\otimes_{A\otimes_RA}(k\otimes_Rk)\to\Omega^1\otimes_Ak$$ is an equivalence. In particular, this implies that $A/R$ is totally ramified at $k$ because $I^k_{A/R}\cong\Omega^1\otimes_Ak$ is a $k$-module.
\end{definition}

\begin{theorem}\label{ThmRam}
Suppose $$\xymatrix{
S\ar[r]\ar[d] &R\ar[d] \\
S^\prime\ar[r] &A
}$$ are commutative ring spectra, and there is a section $S^\prime\to S$ such that the composite $S\to S^\prime\to S$ is equivalent to the identity.

If $A/R$ is totally ramified at $k=A\otimes_{S^\prime}S$ with lifting data, then there is a map $\rho:\text{Ram}^S(A/R)\to\text{THH}^{S^\prime}(A;\Omega^1)$ of $\text{THH}^S(R)\otimes_RA$-modules such that $\rho\otimes_Ak$ is an equivalence.
\end{theorem}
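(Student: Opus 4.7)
The plan is to build $\rho$ by composing the lifting-data map with an outer base change, then identify both sides of $\rho\otimes_A k$ as $\text{THH}^S(k)\otimes_A\Omega^1$ using total ramification, the lifting data, and a key cancellation for $\text{THH}^{S'}(A)\otimes_A k$.

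First I would define $\rho$ as the two-step composite $\text{Ram}^S(A/R) = \text{THH}^S(A;I_{A/R}) \xrightarrow{\text{THH}^S(A;f)} \text{THH}^S(A;\Omega^1) \to \text{THH}^{S'}(A;\Omega^1)$, where $f$ is the lifting-data map, $A\otimes_S A$-linear by restriction along $S\to R$, and the second arrow is the outer base-change map of Proposition \ref{PropOuterBase}, available because $\Omega^1$ is an $A$-module and hence an $A\otimes_{S'}A$-module via multiplication. The required $\text{THH}^S(R)\otimes_R A$-linearity of $\rho$ is automatic from this description.

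The technical heart of the argument is the cancellation $\text{THH}^{S'}(A)\otimes_A k\cong\text{THH}^S(k)$, which uses the section $\sigma\colon S'\to S$ to make $S$ into an $S'$-algebra. Writing $\text{THH}^{S'}(A) = A\otimes_{S'}S^1$ as the tensoring in $\text{CAlg}_{S'}$ and expanding $k=A\otimes_{S'}S$, one gets $\text{THH}^{S'}(A)\otimes_A k = (A\otimes_{S'}S^1)\otimes_{S'}S$; since the base-change functor $(-)\otimes_{S'}S\colon\text{CAlg}_{S'}\to\text{CAlg}_S$ (the left adjoint to restriction along $\sigma$) preserves colimits, it commutes with $S^1$-tensoring, giving $(A\otimes_{S'}S)\otimes_S S^1 = \text{THH}^S(k)$. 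Using this, the target simplifies as $\text{THH}^{S'}(A;\Omega^1)\otimes_A k = \text{THH}^{S'}(A)\otimes_A\Omega^1\otimes_A k \cong \text{THH}^S(k)\otimes_A\Omega^1$. The source simplifies via Remark \ref{RmkBlah2}, total ramification, and the lifting-data equivalence $I^k_{A/R}\cong\Omega^1\otimes_A k$ to $\text{Ram}^S(A/R)\otimes_A k \cong \text{THH}^S(k;I^k_{A/R}) \cong \text{THH}^S(k)\otimes_k I^k_{A/R} \cong \text{THH}^S(k)\otimes_A\Omega^1$ as well.

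The main obstacle is verifying that $\rho\otimes_A k$, read through these two identifications, is indeed the identity on $\text{THH}^S(k)\otimes_A\Omega^1$. After inner base change, $\rho\otimes_A k$ becomes $\text{THH}^S(k;I^k_{A/R})\to\text{THH}^S(k;k\otimes_A\Omega^1\otimes_A k)\to\text{THH}^{S'}(k;k\otimes_A\Omega^1\otimes_A k)$, with the first map coming from $f$ and the second from outer base change. I would compare this with the parallel composite whose second map is $\text{THH}^S(k;-)$ applied to the multiplication $k\otimes_A k\to k$ in the middle tensor factor of $k\otimes_A\Omega^1\otimes_A k$; that parallel composite is the lifting-data equivalence by hypothesis. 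The delicate step is checking that the outer base-change map $\text{THH}^S(k;N)\to\text{THH}^{S'}(k;N)$ (for $N=k\otimes_A\Omega^1\otimes_A k$) and the multiplication-induced map $\text{THH}^S(k;N)\to\text{THH}^S(k;\Omega^1\otimes_A k)$ land at the same object $\text{THH}^S(k)\otimes_A\Omega^1$ and agree there --- both ultimately manifest the canonical comparison $\text{THH}^S(A)\otimes_A k\to\text{THH}^S(k)$, so this is really a naturality check tracing through the key cancellation of the preceding paragraph.
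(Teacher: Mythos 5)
Your proof is correct and takes essentially the same route as the paper: define $\rho$ as $\mathrm{THH}^S(A;f)$ followed by outer base change $\mathrm{THH}^S(A;\Omega^1)\to\mathrm{THH}^{S'}(A;\Omega^1)$, establish the key cancellation $\mathrm{THH}^{S'}(A)\otimes_A k\cong\mathrm{THH}^S(k)$, and then unwind $\rho\otimes_A k$ via inner base change and the lifting-data equivalence to recover Proposition~\ref{PropKey}. Your explicit $S^1$-tensoring argument for the cancellation (that $(-)\otimes_{S'}S$ is a left adjoint commuting with the $S^1$-tensor in $\mathrm{CAlg}_{S'}$, hence $(A\otimes_{S'}S^1)\otimes_{S'}S\cong(A\otimes_{S'}S)\otimes_S S^1$) fills in a step the paper compresses to a single displayed equivalence, and your acknowledgment that the final identification of $\rho\otimes_A k$ with the Proposition~\ref{PropKey} equivalence is a naturality check is at the same level of detail as the paper's own ``(unpacking)''.
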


\begin{remark}
Geometrically, $X=\text{Spec}(R)$ and $Y=\text{Spec}(A)$ in $\text{Sch}_{/\text{Spec}(S)}$. The condition in Theorem \ref{ThmRam} is that $Y$ is fibered over some scheme $\text{Spec}(S^\prime)$, and $\text{Spec}(k)$ is the fiber of $Y$ over a point $\text{Spec}(S)\to\text{Spec}(S^\prime)$.

In the example we are interested in (below), $\text{Spec}(S^\prime)$ will be the affine line, pointed at the origin.
\end{remark}

\begin{proof}
$\text{THH}^{S^\prime}(A)\otimes_Ak\cong\text{THH}^{S^\prime}(A)\otimes_{S^\prime}S\cong\text{THH}^S(A\otimes_{S^\prime}S)=\text{THH}^S(k)$. Let $\rho$ be the map $$\text{Ram}^S(A/R)\cong\text{THH}^S(A;I_{A/R})\to\text{THH}^{S^\prime}(A;\Omega^1)\cong\text{THH}^{S^\prime}(A)\otimes_A\Omega^1.$$ Then $\rho\otimes_Ak$ has the form $$\text{Ram}^S(A/R)\otimes_Ak\to\text{THH}^{S^\prime}(A)\otimes_Ak\otimes_A\Omega^1\cong\text{THH}^S(k)\otimes_A\Omega^1,$$ and (unpacking) is the equivalence of Proposition \ref{PropKey}.
\end{proof}

\noindent Now suppose that $A/R$ is a totally ramified extension of $p$-complete DVRs. By Theorem \ref{MainLemma}, $A/R$ is totally ramified at the residue field $k$, with lifting data $I_{A/R}\to\Omega^1_{A/R}$.

Moreover, if $S$ is a commutative ring spectrum, let $S[z]$ be the group ring spectrum $S[\mathbb{N}]=S\otimes\Sigma^\infty_{+}\mathbb{N}$. Regard $A$ as an $S[z]$-algebra by mapping $z$ to a uniformizer. Then $A\otimes_{S_p[z]}S_p\cong k$, so the conditions of Theorem \ref{ThmRam} are satisfied. The theorem produces a map of $\text{THH}^S(R)\otimes_RA$-modules $$\rho:\text{Ram}^S(A/R)\to\text{THH}^{S[z]}(A;\Omega^1_{A/R}).$$

\begin{theorem}\label{MainThm}
If $A/R$ are $p$-completions of rings of integers and $S\to R$ is arbitrary, $\rho$ is an equivalence. That is, there is a fiber/cofiber sequence of $\text{THH}^S(R)\otimes_RA$-modules, $$\text{THH}^{S[z]}(A)\otimes_A\Omega^1_{A/R}\to\text{THH}^S(R)\otimes_RA\to\text{THH}^S(A).$$
\end{theorem}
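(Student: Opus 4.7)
The plan is to reduce to the case where $A/R$ is itself totally ramified at its residue field, apply Theorem \ref{ThmRam} to produce the map $\rho$, and finally promote the equivalence modulo $k$ to an equivalence of spectra via a derived Nakayama argument. For the reduction, factor $A/R$ as $A/R^\prime/R$, where $R^\prime/R$ is the maximal unramified subextension of this finite extension of $p$-adic DVRs. Since $R^\prime/R$ is \'etale, $\text{Ram}^S(R^\prime/R) \cong 0$ by Example \ref{ExED}, and the triple fiber sequence of Proposition \ref{PropTriple} yields $\text{Ram}^S(A/R) \cong \text{Ram}^S(A/R^\prime)$. Moreover $\Omega^1_{A/R} \cong \Omega^1_{A/R^\prime}$ since the \'etale piece contributes no differentials. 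So both sides of the desired equivalence are unaffected by replacing $R$ with $R^\prime$, and we may assume $A/R$ is itself totally ramified with residue field $k = A/\pi$.

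In this case, Theorem \ref{MainLemma} supplies the lifting data $I_{A/R} \to \Omega^1_{A/R}$ required to apply Theorem \ref{ThmRam} with $S^\prime = S[z]$, sending $z$ to the uniformizer $\pi \in A$ and augmenting by $z \mapsto 0$. This produces the $\text{THH}^S(R) \otimes_R A$-module map $\rho: \text{Ram}^S(A/R) \to \text{THH}^{S[z]}(A) \otimes_A \Omega^1_{A/R}$, for which $\rho \otimes_A k$ is already known to be an equivalence by Theorem \ref{ThmRam}.

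It remains to upgrade this to an equivalence of spectra. Both source and target are $p$-torsion $A$-modules: for the target, $\Omega^1_{A/R} \cong A/(f^\prime(\pi))$ is annihilated by a power of $\pi$ and hence by a power of $p$, so $\text{THH}^{S[z]}(A) \otimes_A \Omega^1_{A/R}$ is $p$-torsion; for the source, the argument of Proposition \ref{PropFF} adapts to give $\text{Ram}^S(A/R) \otimes_A A[1/p] \cong \text{Ram}^S(K/K^\prime)$ where $K, K^\prime$ are the fraction fields of $A, R$, and this vanishes because $K/K^\prime$ is a finite separable extension of characteristic-zero fields, hence \'etale. Therefore the cofiber $C$ of $\rho$ is $p$-torsion and $C \otimes_A k \cong 0$; derived Nakayama for $p$-torsion $A$-modules then forces $C \cong 0$. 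The principal subtlety is that $\text{THH}^S(-)$ is unbounded for general $S$, so a naive Nakayama argument does not directly apply; the $p$-torsionness on both sides is precisely what reduces the Nakayama step to working with $A/p^n$-modules for a single $n$, where it is routine.
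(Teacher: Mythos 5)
Your proposal is correct and follows essentially the same path as the paper: factor $A/R$ as $A/R'/R$ to reduce (via Proposition \ref{PropTriple} and \'etale descent) to the totally ramified case, apply Theorem \ref{ThmRam} with $S' = S[z]$ to get $\rho$ with $\rho\otimes_Ak$ an equivalence, and then conclude by observing that both sides of $\rho$ vanish rationally so that $\rho\otimes_AL$ is also (trivially) an equivalence. One small remark: your closing worry about unboundedness and the reduction to $A/p^n$-modules is unnecessary and not quite right as stated (the source $\text{Ram}^S(A/R)$ is $p$-power torsion in the sense that it vanishes after inverting $p$, but need not be killed by a single $p^n$); the clean version of the Nakayama step is simply that $C\otimes_Ak\cong 0$ forces $\pi$ to act invertibly on the cofiber $C$, so $C\cong C[1/p]$, which vanishes because both source and target of $\rho$ vanish after inverting $p$ — no boundedness or fixed torsion exponent is needed.
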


\begin{proof}
The extension factors $A/R^\prime/R$, where $R^\prime/R$ is unramified and $A/R^\prime$ is totally ramified. Note $\text{Ram}^S(A/R)\cong\text{Ram}^S(A/R^\prime)$ and $\Omega^1_{A/R}\cong\Omega^1_{A/R^\prime}$ by Proposition \ref{PropTriple} and \'etale descent (Example \ref{ExEtale}, since $R^\prime/R$ is \'etale). Therefore, we can assume without loss of generality that $A/R$ is totally ramified.

In this case, $\rho$ is a map of $A$-modules, and $\rho\otimes_Ak$ is an equivalence by Theorem \ref{ThmRam}. If $A$ is the ring of integers in the number field $L$, and $\rho\otimes_AL$ is an equivalence, then $\rho$ must be an equivalence.

It suffices to prove that $\text{Ram}^S(A/R)\otimes_AL\cong\text{THH}^{S[z]}(A;\Omega^1_{A/R})\otimes_AL\cong 0$. The former is Proposition \ref{PropFF}, and the latter is true because $\Omega^1_{A/R}\otimes_AL\cong 0$, since $\Omega^1_{A/R}$ is torsion.
\end{proof}

\noindent In the special cases $S=\mathbb{S}$ or $\mathbb{Z}$, we can calculate explicitly:

\begin{corollary}\label{CorMain}
If $L/K$ is an extension of number fields with rings of integers $A/R$, and $S$ is either $\mathbb{S}$ or $\mathbb{Z}$, then $$\text{Ram}^S_\ast(A/R)\cong\begin{cases}\Omega^1_{A/R}\text{ if }\ast\geq 0\text{ even}, \\ 0\text{ otherwise.}\end{cases}$$
\end{corollary}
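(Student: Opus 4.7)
The plan is to apply Theorem \ref{MainThm} prime-by-prime and then assemble. For each prime $p$ and each $S \in \{\mathbb{S}, \mathbb{Z}\}$, combining Theorem \ref{MainThm} with Proposition \ref{PropComplete} identifies
$$\text{Ram}^S(A/R)^\wedge_p \cong \text{THH}^{S[z]}(A_p) \otimes_{A_p} \Omega^1_{A/R, p},$$
where $\Omega^1_{A/R,p}$ denotes the $p$-primary component of $\Omega^1_{A/R}$ (which coincides with its $p$-completion, since $\Omega^1_{A/R}$ is a finitely generated torsion module over the Dedekind domain $A$, hence a finite abelian group). It thus suffices to show that in both cases $\pi_* \text{THH}^{S[z]}(A_p)$ is a free graded $A_p$-module of the form $A_p[u]$ with $|u|=2$; tensoring with the discrete $A_p$-module $\Omega^1_{A/R,p}$ then yields $\Omega^1_{A/R,p}$ in every even non-negative degree and zero otherwise.

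For $S=\mathbb{S}$ this is immediate from the Krause-Nikolaus Thom spectrum identification $\text{THH}^{\mathbb{S}[z]}(A_p) \cong A_p[\Omega S^3]$ cited in the introduction, combined with $H_*(\Omega S^3;\mathbb{Z}) \cong \mathbb{Z}[x]$, $|x|=2$. For $S=\mathbb{Z}$ I would compute directly. Since $A_p$ is $p$-complete and $\mathbb{Z}$-flat (free of finite rank over $\mathbb{Z}_p$), an easy base change argument gives $\text{THH}^{\mathbb{Z}[z]}(A_p) \cong \text{THH}^{\mathbb{Z}_p[z]}(A_p)$. Writing $A_p \cong \mathbb{Z}_p[z]/(f(z))$ for an Eisenstein polynomial $f$ (so $f$ is a regular element of $\mathbb{Z}_p[z]$), the Koszul resolution yields the cdga equivalence $A_p \otimes_{\mathbb{Z}_p[z]} A_p \cong \Lambda_{A_p}(\epsilon)$ with $|\epsilon|=1$. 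Koszul duality of the exterior-polynomial pair then gives $\text{THH}^{\mathbb{Z}_p[z]}(A_p) = A_p \otimes_{\Lambda_{A_p}(\epsilon)} A_p \cong A_p[y]$ with $|y|=2$, as needed.

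To assemble the $p$-local answers globally, observe that by Proposition \ref{PropFF}, $\text{Ram}^S(A/R) \otimes_A L \cong 0$, so each $\pi_n \text{Ram}^S(A/R)$ is torsion. Since $\Omega^1_{A/R}$ is a finite abelian group supported at the finitely many ramified primes, it decomposes as $\bigoplus_p \Omega^1_{A/R,p}$, and each homotopy group of $\text{Ram}^S(A/R)$ is determined as its $p$-primary decomposition via the $p$-completion computations above. The main obstacle I expect is the $S=\mathbb{Z}$ identification of $\text{THH}^{\mathbb{Z}[z]}(A_p)$, for which there is no directly cited formula analogous to Krause-Nikolaus; the Koszul-duality computation outlined above is what carries the weight.
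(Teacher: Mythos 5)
Your overall strategy---combining Theorem \ref{MainThm} with Proposition \ref{PropComplete} prime by prime, then assembling via the torsion observation coming from Proposition \ref{PropFF}---matches the paper's, and for $S=\mathbb{S}$ you appeal to the same Krause--Nikolaus identification. Where you diverge is the $S=\mathbb{Z}$ case: the paper proceeds by base-changing to the residue field, invoking \'etale descent on $k/\mathbb{F}_p$, and using the Thom-spectrum description $\text{THH}^{\mathbb{Z}}(\mathbb{F}_p)\cong\mathbb{F}_p[\mathbb{C}P^\infty]$, whereas you attempt a direct Koszul computation. That is where there is a genuine gap.

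You write $A_p\cong\mathbb{Z}_p[z]/(f(z))$ with $f$ Eisenstein and $z$ mapping to a uniformizer $\pi$. This presentation exists only when $A_p/\mathbb{Z}_p$ is totally ramified, i.e.\ when the residue field of $A_p$ is $\mathbb{F}_p$. In general the residue field $k$ is a larger finite field, the uniformizer does not generate $A_p$ over $\mathbb{Z}_p$ (one has $\mathbb{Z}_p[\pi]\subsetneq A_p$), and the correct presentation is $A_p\cong W(k)[z]/(f)$ with $f$ Eisenstein over the Witt vectors $W(k)$. So the two-term Koszul complex you use is not a resolution of $A_p$ over $\mathbb{Z}_p[z]$, and the claim $A_p\otimes_{\mathbb{Z}_p[z]}A_p\cong\Lambda_{A_p}(\epsilon)$ fails. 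The fix is to run the Koszul argument over $W(k)[z]$ to compute $\text{THH}^{W(k)[z]}(A_p)$, and then use that $W(k)[z]/\mathbb{Z}_p[z]$ is \'etale together with outer base change (Proposition \ref{PropOuterBase}/Corollary \ref{CorOuterBase}) to identify $\text{THH}^{\mathbb{Z}_p[z]}(A_p)\cong\text{THH}^{W(k)[z]}(A_p)$. This \'etale reduction is exactly the role that ``\'etale descent on $k/\mathbb{F}_p$'' plays in the paper's own proof, so you cannot avoid it; you have only moved it, and then dropped it.

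Two lesser points. The asserted equivalence $\text{THH}^{\mathbb{Z}[z]}(A_p)\cong\text{THH}^{\mathbb{Z}_p[z]}(A_p)$ is false before $p$-completion (already $\mathbb{Z}_p\otimes_{\mathbb{Z}}\mathbb{Z}_p\neq\mathbb{Z}_p$), but it is also unneeded: Proposition \ref{PropComplete} already $p$-completes the base $S$, and tensoring against the finite $p$-torsion module $\Omega^1_{A/R,p}$ washes out the difference in any case. Also, $A_p\otimes_{\Lambda_{A_p}(\epsilon)}A_p$ is the divided power algebra $\Gamma_{A_p}[y]$, not the polynomial algebra $A_p[y]$; these differ as rings in positive residue characteristic (this is exactly why the paper's proof invokes the divided power structure on $H_\ast(\mathbb{C}P^\infty)$), though they agree as graded $A_p$-modules, which is all the Corollary requires.
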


\begin{warning}
$\text{Ram}_\ast(A/R)\to\text{Ram}^\mathbb{Z}_\ast(A/R)$ is \emph{not} an isomorphism.
\end{warning}

\begin{proof}
As before, $\text{THH}^{\mathbb{S}_p[z]}(A_p)\otimes_{A_p}k\cong\text{THH}(k)$, where $k$ is the residue field of $A_p$, a perfect field. By \'etale descent on $k/\mathbb{F}_p$ along with the identity $\text{THH}(\mathbb{F}_p)\cong\mathbb{F}_p[\Omega S^3]$ shown in \cite{Blumberg}, $$\text{THH}_\ast^{\mathbb{S}_p[z]}(A_p)\cong H_\ast(\Omega S^3;A_p)\cong A_p[x],$$ where $x$ is a generator in degree $2$. (More details are in \cite{KN} Theorem 3.1.)

For the same reason, but now using $\text{THH}^\mathbb{Z}(\mathbb{F}_p)\cong\mathbb{F}_p[\mathbb{C}P^\infty]$ and $H_\ast(\mathbb{C}P^\infty)$ is a divided power algebra, $$\text{THH}^{\mathbb{Z}_p[z]}_\ast(A_p)\cong\begin{cases}A_p\text{ if }\ast\geq 0\text{ even,} \\ 0\text{ otherwise.}\end{cases}.$$ In either case, by the Universal Coefficient Theorem, $$\text{THH}^{S[z]}_\ast(A;\Omega^1_{A/R})^\wedge_p\cong\text{THH}^{S[z]}_\ast(A)^\wedge_p\otimes_A\Omega^1_{A/R}\cong\begin{cases}(\Omega^1_{A/R})^\wedge_p\text{ if }\ast\geq 0\text{ even,} \\ 0\text{ otherwise,}\end{cases}$$ for every prime $p$, and the result follows by Theorem \ref{MainThm}.
\end{proof}

\begin{example}
If $R$ is a ring of integers, consider the homotopy fiber sequence $\text{Ram}^\mathbb{Z}(R/\mathbb{Z})\to\text{THH}^\mathbb{Z}(\mathbb{Z})\otimes_\mathbb{Z}R\cong R\to\text{THH}^\mathbb{Z}(R)$. By Corollary \ref{CorMain} and the long exact sequence in $\pi_\ast$ associated to the fiber sequence, the ordinary Hochschild homology is $$\text{THH}_\ast^\mathbb{Z}(R)\cong\begin{cases}R\text{ if }\ast=0, \\ \Omega^1_{R/\mathbb{Z}}\text{ if }\ast>0\text{ odd}, \\ 0\text{ otherwise.}\end{cases}$$ However, $\text{THH}_\ast^\mathbb{Z}(R)$ is not hard to calculate using other techniques; see Larsen-Lindenstrauss \cite{LL}.
\end{example}

\end{document}